\newcommand{\bcen}{\begin{center}}
\newcommand{\ecen}{\end{center}}
\newtheorem{theorem}{Theorem}[section]
\newtheorem{lemma}[theorem]{Lemma}
\newtheorem{corollary}[theorem]{Corollary}
\newtheorem{remark}[theorem]{Remark}
\def\L{\mathbb{L_{\phi}}}
\begin{document}
\setcounter{page}{1}
\title{Brock-type isoperimetric inequality for Steklov eigenvalues of the Witten-Laplacian}
\author{Jing Mao$^{\ast}$,~~Shijie Zhang}

\date{}
\protect \footnotetext{\!\!\!\!\!\!\!\!\!\!\!\!{~$\ast$ Corresponding author\\
MSC 2020:
35P15, 49Jxx, 35J15.}\\
{Key Words: Witten-Laplacian, Steklov eigenvalues, Laplacian,
isoperimetric inequalities. } }
\maketitle ~~~\\[-15mm]

\begin{center}
{\footnotesize Faculty of Mathematics and Statistics,\\
 Key Laboratory of Applied
Mathematics of Hubei Province, \\
Hubei University, Wuhan 430062, China\\
Email: jiner120@163.com
 }
\end{center}


\begin{abstract}
In this paper, by imposing suitable assumptions on the weighted
function, (under the constraint of fixed weighted volume) a
Brock-type isoperimetric inequality for Steklov-type eigenvalues of
the Witten-Laplacian on bounded domains in a Euclidean space or a
hyperbolic space has been proven. This conclusion is actually an
interesting extension of F. Brock's classical result about the
isoperimetric inequality for Steklov eigenvalues of the Laplacian
given in the influential paper [Z. Angew. Math. Mech. {\bf 81}
(2001) 69--71]. Besides, a related open problem has also been
proposed in this paper.
 \end{abstract}


\section{Introduction}
\renewcommand{\thesection}{\arabic{section}}
\renewcommand{\theequation}{\thesection.\arabic{equation}}
\setcounter{equation}{0}

Let $(M^{n},\langle\cdot,\cdot\rangle)$ be an $n$-dimensional
($n\geq2$) complete Riemannian manifold\footnote{~Without
specifications, generally, in this paper same symbols have the same
meanings.} with the metric $g:=\langle\cdot,\cdot\rangle$. Let
$\Omega\subset M^{n}$ be a
 domain in $M^n$, and $\phi\in C^{\infty}(M^n)$ be a
smooth\footnote{~In fact, one might see that $\phi\in C^{2}$ is
suitable to derive our main conclusions in this paper. However, in
order to avoid a little bit boring discussion on the regularity of
$\phi$ and following the assumption on conformal factor $e^{-\phi}$
for the notion of \emph{smooth metric measure spaces} in many
literatures (including of course those cited in this paper), without
specification, we prefer to assume that $\phi$ is smooth on the
domain $\Omega$. } real-valued function defined on $M^n$. In this
setting, on $\Omega$, the following elliptic operator\footnote{~In
some previous works of the corresponding author, the symbol for the
Witten-Laplacian is $\Delta_{\phi}$, but in some others it is given
as $\L$. Since we (\cite[Appendix A]{DDMZ}) firstly and formally
give the detailed proof for the existence of discrete spectrum of
the eigenvalue problem (\ref{a10}) considered below, following the
usage of the symbol for the Witten-Laplacian in \cite{DDMZ}, we
prefer to also use the symbol $\L$ to denote the Witten-Laplacian in
this paper.}
\begin{eqnarray*}
\L:=\Delta-\langle\nabla\phi,\nabla\cdot\rangle
\end{eqnarray*}
can be well-defined, where $\nabla$, $\Delta$ are the gradient and
the Laplace operators on $M^{n}$, respectively. The operator $\L$
w.r.t. the metric $g$ is called the \emph{Witten-Laplacian} (also
called the \emph{drifting Laplacian} or the \emph{weighted
Laplacian}). It has historical background, realistic needs and
theoretical significance to study this operator no matter in
spectral analysis on manifolds but also in some interesting and hot
research topics in Geometric Analysis. To have a nice experience
about this statement, we refer readers to some of the corresponding
author's previous papers \cite[Section 1]{CM}, \cite[Section 1]{CM1}
and \cite[Section 1 and Remark 4.3]{DMWW}, where the reason and the
motivation of investigating the Witten-Laplacian $\L$ have been
explained clearly.

Using the conformal measure $d\mu:=e^{-\phi}dv$, the notion,
\emph{smooth metric measure space} $(M^{n},g,d\mu)$, can be
well-defined, which is actually the given Riemannian manifold
$(M^{n},g)$ equipped with the weighted measure $d\mu$. Smooth metric
measure space $(M^{n},g,d\mu)$ sometimes is also called the
\emph{weighted measure space}. For the smooth metric measure space
$(M^{n},g,d\mu)$, one can define a notion, weighted volume (or
$\phi$-volume), as follows:
\begin{eqnarray*}
|M^{n}|_{\phi}:=\int_{M^{n}}d\mu=\int_{M^n}e^{-\phi}dv,
\end{eqnarray*}
 where $dv$ is the Riemannian volume element (or volume density)
 of $M^{n}$, and sometimes $d\mu$ is also called the weighted volume
 density (or weighted measure) w.r.t. the metric $g$.
On a compact smooth metric measure space
$(\Omega,\langle\cdot,\cdot\rangle,d\mu)$, one can naturally
consider the Steklov-type eigenvalue problem of the Witten-Laplacian
$\L$ as follows
\begin{eqnarray} \label{a10}
\left\{\begin{array}{ll} \L u
=0&~~\mathrm{in} ~~\Omega\subset M^{n}, \\[2mm]
\frac{\partial u}{\partial \vec{\eta}}=\sigma
u&~~\mathrm{on}~~\partial \Omega,
\end{array}\right.
\end{eqnarray}
where $\vec{\eta}$ is the outward unit  normal vector along the
Lipschitz continuous boundary $\partial\Omega$. By \cite[Appendix
A]{DDMZ}, one knows that the operator $\L$  in (\ref{a10}) has
discrete spectrum and its relation with a Dirichlet-to-Neumann
operator can be described as follows:\footnote{~As explained in the
footnote of the $16^{\mathrm{th}}$ page of \cite{DDMZ}, here a
convention has been used, that is, for the Sobolev space
$\widetilde{W}^{k,p}(\cdot)$ w.r.t. the weighted volume density, if
$p=2$, we usually write
$\widetilde{H}^{k}(\cdot)=\widetilde{W}^{k,p}(\cdot)$.}
\begin{itemize}
\item \emph{ The eigenvalues of the eigenvalue problem of the
Witten-Laplacian (\ref{a10})  can be interpreted as the eigenvalues
of the Dirichlet-to-Neumann operator
$\mathcal{D}:\widetilde{H}^{1/2}(\partial\Omega)\rightarrow
\widetilde{H}^{-1/2}(\partial\Omega)$ which maps a function
$f\in\widetilde{H}^{1/2}(\partial\Omega)$ to
$\mathcal{D}f=\partial_{\vec{\eta}}(H_{w}f)=\frac{\partial
(H_{w}f)}{\partial\vec{\eta}}$, where $H_{w}f$ is the weighted
harmonic extension of $f$ to the interior of $\Omega$ (i.e.,
$\L(H_{w}f)=0$ in $\Omega$).}
\end{itemize}
All the elements (i.e., eigenvalues) in the discrete spectrum of the
eigenvalue problem (\ref{a10}) can be listed non-decreasingly as
follows
 \begin{eqnarray} \label{sequence-1}
0=\sigma_{0,\phi}(\Omega)<\sigma_{1,\phi}(\Omega)\leq\sigma_{2,\phi}(\Omega)\leq\sigma_{3,\phi}(\Omega)\leq\cdots\uparrow\infty.
 \end{eqnarray}
 Each eigenvalue $\sigma_{i,\phi}(\Omega)$, $i=0,1,2,\cdots$, in the sequence (\ref{sequence-1}) is repeated
 according to its multiplicity (i.e. the dimension of the eigenspace of the eigenvalue $\sigma_{i,\phi}(\Omega)$, which is
 finite). Clearly, an arbitrary nonzero constant function can be chosen as an
 eigenfunction of the first eigenvalue $\sigma_{0,\phi}(\Omega)=0$.
 By the variational principle, the $k$-th nonzero eigenvalue
 $\sigma_{k,\phi}(\Omega)$ can be characterized as follows
\begin{eqnarray} \label{cha-1}
\sigma_{k,\phi}(\Omega)=\min\left\{\frac{\int_{\Omega}|\nabla
u|^{2}d\mu}{\int_{\partial\Omega}u^{2}\widehat{d\mu}}\Bigg{|}u\in
\widetilde{W}^{1,2}(\Omega),u\neq0,\int_{\partial\Omega}uu_{i}\widehat{d\mu}=0\right\},
 \end{eqnarray}
where $u_{i}$, $i=0,1,2,\cdots,k-1$, denotes an eigenfunction of
$\sigma_{i,\phi}(\Omega)$. Here let us make an explanation on the
meaning of two notations in (\ref{cha-1}). First, following the
convention in \cite{CM}, denote by $\widehat{dv}$ the
$(n-1)$-dimensional Hausdorff measure of the boundary associated to
the Riemannian volume element $dv$, and this usage will be used
throughout the paper. Similarly,
$\widehat{d\mu}=e^{-\phi}\widehat{dv}$ would be the weighted volume
element of the boundary. Second, as explained in \cite[Section
1]{DDMZ}, $\widetilde{W}^{1,2}(\Omega)$ should be the completion of
the set of smooth functions $C^{\infty}(\Omega)$ under the weighted
Sobolev norm
${\widetilde{\|u\|}}_{1,2}:=\left(\int_{\Omega}u^{2}d\mu+\int_{\partial\Omega}|\nabla
u|^{2}\widehat{d\mu}\right)^{1/2}$. Specially, the first nonzero
eigenvalue $\sigma_{1,\phi}(\Omega)$ of the eigenvalue problem
(\ref{a10}) satisfies
\begin{eqnarray} \label{cha-2}
\sigma_{1,\phi}(\Omega)=\min\left\{\frac{\int_{\Omega}|\nabla
u|^{2}d\mu}{\int_{\partial\Omega}u^{2}\widehat{d\mu}}\Bigg{|}u\in
\widetilde{W}^{1,2}(\Omega),u\neq0,\int_{\partial\Omega}u\widehat{d\mu}=0\right\}.
 \end{eqnarray}
For convenience and without confusion, in the sequel, except
specification we will write $\sigma_{i,\phi}(\Omega)$ as
$\sigma_{i,\phi}$ directly. This convention would be also used when
we meet with other possible eigenvalue problems.

Before giving the main conclusions of this paper, we prefer to
briefly show the connection and also the difference between the
eigenvalue problem (\ref{a10}) and the classical Steklov eigavalue
problem of the Laplacian. In fact, if $\phi=const.$ is a constant
function, then $\L$ degenerates into the classical Laplace operator
$\Delta$, and correspondingly  (\ref{a10}) becomes
\begin{eqnarray} \label{a11}
\left\{\begin{array}{ll} \Delta u
=0&~~\mathrm{in} ~~\Omega\subset M^{n}, \\[2mm]
\frac{\partial u}{\partial \vec{\eta}}=\sigma
u&~~\mathrm{on}~~\partial \Omega.
\end{array}\right.
\end{eqnarray}
For bounded domains in the Euclidean plane $\mathbb{R}^2$, this
problem was firstly introduced by M. W. Stekloff \cite{MWS} over 120
years ago, and his motivation came from physics -- the function $u$
represents the steady state temperature on $M^{n}$ such that the
flux on the boundary is proportional to the temperature. Based on
this reason, (\ref{a11}) is formally called \emph{the Steklov
eigenvalue problem} of the Laplacian. Since the Steklov eigenvalue
problem (\ref{a11}) was proposed, many mathematicians focused on
this topic and progresses have been made continuously (especially
recent years). For a more detailed introduction on the historical
background and physical motivations of the problem (\ref{a11}),
please see a memorial article \cite{KKKNP}. Readers are invited to
check two survey articles \cite{CGGS,GP} (especially the longer one,
161 pages, published very recently) to know some recent developments
on the Steklov eigenvalue problem and to try to see the panorama of
this problem and also the relations with other topics in
Differential Geometry.

One might have an illusion that since the eigenvalue problem
(\ref{a10}) can be obtained from the Steklov eigenvalue problem by
replacing the Laplacian $\Delta$ by the Witten-Laplacian $\L$, it
might be easy to derive conclusions for eigenvalues of the problem
(\ref{a10}) directly from the Steklov eigenvalue problem
(\ref{a11}). Readers would see that this thought is a little bit
naive through only a simple example. In fact, for the Steklov
eigenvalue problem (\ref{a11}), if $\Omega$ is a unit disk in the
plane $\mathbb{R}^2$, one can directly compute its Steklov
eigenvalues and all the eigenvalues can be listed non-decreasingly
as follows
\begin{eqnarray*}
0,1,1,2,2,\cdots,k,k,\cdots,
\end{eqnarray*}
whose corresponding eigenfunctions in polar coordinates $(r,\theta)$
are given by
\begin{eqnarray*}
1,r\sin\theta,r\cos\theta,r\sin2\theta,r\cos2\theta,\cdots,r\sin
k\theta,r\cos k\theta,\cdots.
\end{eqnarray*}
However, for the eigenvalue problem (\ref{a10}) and under the
assumption that $\Omega$ is a unit disk in $\mathbb{R}^2$, (except
the trivial case $\phi=const.$) it is impossible to compute
\emph{explicitly} all the eigenvalues of the Witten-Laplacian even
if $\phi$ is a radial function. Hence, when jumping from the
classical Steklov eigenvalue problem (\ref{a11}) to the Steklov-type
eigenvalue problem (\ref{a10}) of the Witten-Laplacian, one do has
some difficulties to overcome. Besides, since  (\ref{a10}) has
relation with  (\ref{a11}) formally (especially their boundary
conditions have almost the same type), we prefer to call the
eigenvalues of the eigenvalue problem (\ref{a10}) the
\emph{Steklov-type eigenvalues of the Witten-Laplacian} (or
sometimes, simply, the Steklov eigenvalues of the Witten-Laplacian).

 In this paper, we focus on the Steklov-type eigenvalue problem
(\ref{a10}) of the Witten-Laplacian and can prove an isoperimetric
inequality for the sums of the reciprocals of the first $(n-1)$
nonzero Steklov eigenvalues of the Witten-Laplacian on bounded
domains in $\mathbb{R}^n$ or a hyperbolic space. However, in order
to state our conclusions clearly, we need to impose an assumption on
the function $\phi$ as follows:
\begin{itemize}
\item (\textbf{Property I}) Furthermore, $\phi$ is a function
of the Riemannian distance parameter $t:=d(o,\cdot)$ for some point
$o\in\mathrm{hull}(\Omega)$, and $\phi$ is also a non-increasing
concave function defined on $[0,\infty)$.
\end{itemize}
Here $\mathrm{hull}(\Omega)$ stands for the convex hull of the
domain $\Omega$. Clearly, if a given open Riemannian $n$-manifold
$(M^{n},g)$ was endowed with the weighted density $e^{-\phi}dv$ with
$\phi$ satisfying \textbf{Property I}, then $\phi$ would be a
\emph{\textbf{radial}} function defined on $M^{n}$ w.r.t. the radial
distance $t$, $t\in[0,\infty)$. Especially, when the given open
$n$-manifold is chosen to be $\mathbb{R}^{n}$ or $\mathbb{H}^{n}$
(i.e., the $n$-dimensional hyperbolic space of sectional curvature
$-1$), we additionally require that $o$ is the origin  of
$\mathbb{R}^{n}$ or $\mathbb{H}^{n}$.

\begin{theorem} \label{theo-1}
Assume that the function $\phi$ satisfies \textbf{Property I}. Let
$\Omega$ be a bounded domain with Lipschitz continuous boundary in
 $\mathbb{R}^n$, and let $B_{R}(o)$ be a ball of radius $R$ and centered at the
 origin $o$
 of
 $\mathbb{R}^{n}$ such that $|\Omega|_{\phi}=|B_{R}(o)|_{\phi}$,
 i.e. $\int_{\Omega}d\eta=\int_{B_{R}(o)}d\eta$.  Then
  \begin{eqnarray} \label{II-1}
\frac{1}{\sigma_{1,\phi}(\Omega)}+\frac{1}{\sigma_{2,\phi}(\Omega)}+\cdots+\frac{1}{\sigma_{n-1,\phi}(\Omega)}\geq\frac{n-1}{\sigma_{1,\phi}(B_{R}(o))}.
\end{eqnarray}
The equality case holds if and only if $\Omega$ is the ball
$B_{R}(o)$.
\end{theorem}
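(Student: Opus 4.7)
The plan adapts Brock's rotation-and-test-function method to the weighted setting, with trial functions chosen as the first-angular-mode Steklov eigenfunctions of the comparison ball $B_R(o)$ itself, so that the inequality becomes tight when $\Omega=B_R(o)$. The three main ingredients are (i) a separation-of-variables computation on $B_R(o)$ identifying $\sigma_{1,\phi}(B_R(o))$ and its $n$-fold degenerate eigenspace; (ii) a Hersch-style centering together with a Gram--Schmidt (QR) rotation enforcing the orthogonality constraints in the Rayleigh characterization (\ref{cha-1}); (iii) a weighted symmetric-decreasing rearrangement driven by \textbf{Property I}.

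First I would verify by separation of variables that the first nonzero Steklov-type eigenvalue on $B_R(o)$ corresponds to the angular mode $\ell=1$ and is $n$-fold degenerate, with eigenfunctions $w_i(x)=f(|x|)\,x_i/|x|$, where $f$ solves
\begin{equation*}
f''(r)+\Bigl(\frac{n-1}{r}-\phi'(r)\Bigr)f'(r)-\frac{n-1}{r^{2}}f(r)=0,\qquad f(0)=0,
\end{equation*}
together with the Steklov boundary condition $f'(R)=\sigma_{1,\phi}(B_R(o))\,f(R)$. Under \textbf{Property I} (smoothness, monotonicity and concavity of $\phi$), a positive bounded solution $f$ exists on $[0,\infty)$, so the $w_i$ extend smoothly through the origin and can serve as test functions on an arbitrary $\Omega$; the $n$-fold degeneracy of this mode makes equality in (\ref{II-1}) automatic at $\Omega=B_R(o)$.

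Next, since the weight is rigidly anchored at $o$ and translating $\Omega$ would destroy the radial structure of $\phi$, I would introduce centering scalars $c_i:=|\partial\Omega|_{\phi}^{-1}\int_{\partial\Omega}w_i\widehat{d\mu}$ and set $\hat{w}_i:=w_i-c_i$, so each $\hat{w}_i$ is $L^{2}(\partial\Omega,\widehat{d\mu})$-orthogonal to the constant eigenfunction $u_0$. Let $u_1,\ldots,u_{n-2}$ be $L^{2}(\partial\Omega,\widehat{d\mu})$-normalized eigenfunctions for $\sigma_{1,\phi}(\Omega),\ldots,\sigma_{n-2,\phi}(\Omega)$. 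A QR decomposition of the $n\times(n-2)$ matrix $A_{ij}:=\int_{\partial\Omega}\hat{w}_i u_j\widehat{d\mu}$ yields an orthogonal $R\in O(n)$ so that the rotated trial functions $\tilde{w}_k:=\sum_{j}R_{kj}\hat{w}_j$ are $L^{2}(\partial\Omega,\widehat{d\mu})$-orthogonal to $u_0,u_1,\ldots,u_{k-1}$ for every $k=1,\ldots,n-1$; rotation invariance preserves the key identities
\begin{equation*}
\sum_{k=1}^{n}\tilde{w}_k^{2}=f(|x|)^{2},\qquad \sum_{k=1}^{n}|\nabla\tilde{w}_k|^{2}=f'(|x|)^{2}+(n-1)\frac{f(|x|)^{2}}{|x|^{2}}.
\end{equation*}
Feeding each $\tilde{w}_k$ into (\ref{cha-1}) gives $\sigma_{k,\phi}(\Omega)\le\int_{\Omega}|\nabla\tilde{w}_k|^{2}d\mu/\int_{\partial\Omega}\tilde{w}_k^{2}\widehat{d\mu}$, and combining the reciprocal-sum bound with an AM--HM/matrix-trace inequality (exploiting the rotation-invariant sums above) reduces $\sum_{k=1}^{n-1}1/\sigma_{k,\phi}(\Omega)$ to a quotient of radial weighted integrals of $f^{2}$ over $\partial\Omega$ and of $f'^{2}+(n-1)f^{2}/r^{2}$ over $\Omega$, with the reduction already tight at $\Omega=B_R(o)$.

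The main obstacle, and the point where \textbf{Property I} becomes indispensable, is the final step showing that this radial quotient is minimized at $B_R(o)$ subject to the constraint $|\Omega|_{\phi}=|B_R(o)|_{\phi}$. My plan is a Schwarz-type symmetric-decreasing rearrangement of $\Omega$ about $o$: the radiality and monotonicity of $e^{-\phi}$ preserve $|\Omega|_{\phi}$, while the concavity of $\phi$ and the monotonicity (read off the defining ODE for $f$) of the radial profiles $f^{2}$ and $f'^{2}+(n-1)f^{2}/r^{2}$ force the boundary and interior weighted integrals to move in the favorable direction upon rearranging $\Omega$ to $B_R(o)$. The rigidity of rearrangement, combined with the equality case of the variational step, forces $\Omega=B_R(o)$, completing the proof.
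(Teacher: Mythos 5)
Your plan follows the broad strategy of the paper (separation of variables on $B_R(o)$, trial functions built from the first angular mode, a Weinberger-style summation over coordinates, and a radial comparison with the ball), but the centering step introduces a genuine gap.

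Subtracting boundary averages, $\hat{w}_i := w_i - c_i$, does enforce $\int_{\partial\Omega}\hat{w}_i\,\widehat{d\mu}=0$, but you then claim that after a rotation the test functions still satisfy $\sum_{k=1}^n \tilde{w}_k^2 = f(|x|)^2$. Rotations do preserve the sum of squares, but the subtraction does not: $\sum_i(w_i-c_i)^2 = f^2 - 2\sum_i c_i w_i + |c|^2$, and this equals $f^2$ only if every $c_i$ vanishes. That Pythagoras-type identity is exactly what lets the Brock/Weinberger summation collapse the boundary integral to $\int_{\partial\Omega}f^2\,\widehat{d\mu}$ and the interior gradient term to a radial profile; without it the reduction to a quotient of radial weighted integrals fails. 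The paper sidesteps this precisely by \emph{not} subtracting constants: it invokes the Brouwer fixed point theorem (following Ashbaugh--Benguria and Chen) to locate a suitable origin in $\mathrm{hull}(\Omega)$ so that $\int_{\partial\Omega}f(t)\psi_i\,\widehat{d\mu}=0$ holds for all $i$ as stated, and uses a rotation alone (which is harmless to the identity) to handle orthogonality to the higher eigenfunctions. You correctly sensed the tension between translating $\Omega$ and the rigidly anchored weight, but the centering fix does not resolve it; it simply transfers the problem from the weight to the test functions.

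A smaller but nontrivial issue: you assert that the radial ODE for $f$ admits a positive bounded solution on all of $[0,\infty)$. It does not in general (in the unweighted Euclidean case the solution is $f(r)=r$). The paper instead truncates, taking $f(t)=T(t)$ for $0\le t\le R$ and $f(t)=T(R)$ for $t>R$, so that the trial functions remain well defined when $\Omega\not\subset B_R(o)$ and so that the radial profiles $G$ and $H$ feeding into Lemma~\ref{lemma2-3} and Lemma~\ref{lemma2-4} have the monotonicity required by the final comparison. The rest of your outline --- the monotonicity of those radial profiles coming from the non-increase and concavity of $\phi$, and the radial rearrangement comparison under the constraint $|\Omega|_\phi=|B_R(o)|_\phi$ --- does line up with the paper's final step.
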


By applying the sequence (\ref{sequence-1}), i.e. the monotonicity
of Steklov eigenvalues of the Witten-Laplacian, from (\ref{II-1})
one has:

\begin{corollary} \label{coro-1}
Under the assumptions of Theorem \ref{theo-1}, we have
\begin{eqnarray} \label{II-2}
\sigma_{1,\phi}(\Omega)\leq\sigma_{1,\phi}(B_{R}(o)),
\end{eqnarray}
with equality holding if and only if $\Omega$ is the ball
$B_{R}(o)$. That is to say, among all bounded domains in
$\mathbb{R}^n$ having the same weighted volume, the ball $B_{R}(o)$
maximizes the first nonzero Steklov eigenvalue of the
Witten-Laplacian, provided the function $\phi$ satisfies
\textbf{Property I}.
\end{corollary}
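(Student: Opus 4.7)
The plan is to derive Corollary \ref{coro-1} as an immediate consequence of Theorem \ref{theo-1}, exploiting the monotonicity of the Steklov spectrum recorded in (\ref{sequence-1}).

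First, I would observe that the listing (\ref{sequence-1}) gives
\begin{eqnarray*}
\sigma_{1,\phi}(\Omega)\leq\sigma_{2,\phi}(\Omega)\leq\cdots\leq\sigma_{n-1,\phi}(\Omega),
\end{eqnarray*}
so upon taking reciprocals and summing,
\begin{eqnarray*}
\frac{n-1}{\sigma_{1,\phi}(\Omega)}\geq\frac{1}{\sigma_{1,\phi}(\Omega)}+\frac{1}{\sigma_{2,\phi}(\Omega)}+\cdots+\frac{1}{\sigma_{n-1,\phi}(\Omega)}.
\end{eqnarray*}
Combining this trivial bound with (\ref{II-1}) in Theorem \ref{theo-1} immediately yields
\begin{eqnarray*}
\frac{n-1}{\sigma_{1,\phi}(\Omega)}\geq\frac{n-1}{\sigma_{1,\phi}(B_{R}(o))},
\end{eqnarray*}
which is exactly (\ref{II-2}).

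For the rigidity statement, suppose equality holds in (\ref{II-2}), i.e.\ $\sigma_{1,\phi}(\Omega)=\sigma_{1,\phi}(B_{R}(o))$. Then both inequalities above must be equalities; in particular the sum in (\ref{II-1}) realises its lower bound $\frac{n-1}{\sigma_{1,\phi}(B_{R}(o))}$, so the equality clause of Theorem \ref{theo-1} forces $\Omega=B_{R}(o)$. The converse is obvious. No genuinely new computation is required beyond Theorem \ref{theo-1}, so there is no real obstacle here; the only point that deserves a line of justification is the monotonicity of the sequence $\{\sigma_{i,\phi}\}$, which is already built into (\ref{sequence-1}) (and which also ensures that dividing by each $\sigma_{i,\phi}(\Omega)$ is legitimate, since $\sigma_{i,\phi}(\Omega)>0$ for $i\geq1$).
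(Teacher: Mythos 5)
Your argument is correct and is exactly the one the paper intends: the paper simply invokes the monotonicity of the sequence (\ref{sequence-1}) together with Theorem~\ref{theo-1} and presents the corollary without further elaboration. Your write-up merely makes explicit the short chain $\frac{n-1}{\sigma_{1,\phi}(\Omega)}\geq\sum_{i=1}^{n-1}\frac{1}{\sigma_{i,\phi}(\Omega)}\geq\frac{n-1}{\sigma_{1,\phi}(B_{R}(o))}$ and the rigidity transfer, which is precisely the intended reasoning.
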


\begin{theorem}  \label{theo-2}
Assume that the function $\phi$ satisfies \textbf{Property I}. Let
$\Omega$ be a bounded domain in $\mathbb{H}^n$, and let $B_{R}(o)$
be a geodesic ball of radius $R$ and centered at the
 origin $o$
 of
 $\mathbb{H}^{n}$ such that $|\Omega|_{\phi}=|B_{R}(o)|_{\phi}$.
 Then
 \begin{eqnarray} \label{II-1-1}
\frac{1}{\sigma_{1,\phi}(\Omega)}+\frac{1}{\sigma_{2,\phi}(\Omega)}+\cdots+\frac{1}{\sigma_{n-1,\phi}(\Omega)}\geq\frac{n-1}{\sigma_{1,\phi}(B_{R}(o))}.
\end{eqnarray}
The equality case holds if and only if $\Omega$ is isometric to the
geodesic ball $B_{R}(o)$.
\end{theorem}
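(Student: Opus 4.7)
The plan is to adapt the strategy of Theorem 1.1 (the Euclidean case) to the hyperbolic setting in three steps; all three require hyperbolic-specific modifications. First, I will construct radial eigenfunctions on the model ball. In geodesic polar coordinates $(r,\theta)$ at $o$ the Witten-Laplacian reads
\[
\L = \partial_r^2 + [(n-1)\coth r - \phi'(r)]\partial_r + \sinh^{-2}(r)\Delta_{S^{n-1}},
\]
so separating variables $v(r,\theta) = G(r)\Theta(\theta)$ with $\Theta$ a first spherical harmonic ($\Delta_{S^{n-1}}\Theta = -(n-1)\Theta$) reduces $\L v = 0$ to
\[
G''(r) + [(n-1)\coth r - \phi'(r)]\,G'(r) - \frac{n-1}{\sinh^2 r}\,G(r) = 0,\quad G(0)=0.
\]
A Sturm/Wronskian analysis exploiting \textbf{Property I} yields $G, G' > 0$ on $(0,\infty)$. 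Fixing an $L^2(S^{n-1})$-orthonormal basis $\Theta_1,\ldots,\Theta_n$ of first spherical harmonics, the functions $v_i(x) := G(r(x))\Theta_i(\hat x)$ span the $\sigma_{1,\phi}(B_R(o))$-eigenspace and are Witten-harmonic on $\mathbb{H}^n\setminus\{o\}$; since $o \in \mathrm{hull}(\Omega)$, each $v_i$ is a valid trial function on $\Omega$.

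Second, I will centre and orthogonalize. Set $c_i := |\partial\Omega|_\phi^{-1}\int_{\partial\Omega} v_i\,\widehat{d\mu}$ and $w_i := v_i - c_i$; these remain in $\ker \L$ (constants are Witten-harmonic) and satisfy $\int_{\partial\Omega} w_i\,\widehat{d\mu} = 0$. The boundary Gram matrix $A_{ij} := \int_{\partial\Omega} w_iw_j\,\widehat{d\mu}$ is symmetric positive semidefinite, so rotating $\{\Theta_i\}$ by the diagonalizing element of $SO(n)$ makes $A$ diagonal. Discarding one direction (via a Hersch-type optimal-selection argument that preserves the estimate), the remaining $n-1$ functions are pairwise $L^2(\partial\Omega,\widehat{d\mu})$-orthogonal, mean-zero, and Witten-harmonic, and hence satisfy the orthogonality required by (\ref{cha-1}) at orders $1,\ldots,n-1$. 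The Gram-matrix form of the min-max principle, applied to their $(n-1)$-dimensional span, then yields
\[
\sum_{i=1}^{n-1}\frac{1}{\sigma_{i,\phi}(\Omega)} \geq \sum_{i=1}^{n-1}\frac{\int_{\partial\Omega} w_i^2\,\widehat{d\mu}}{\int_\Omega|\nabla w_i|^2\,d\mu}.
\]

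Third, I will reduce to radial integrals and compare with the ball. Using $|\nabla v_i|^2 = G'(r)^2\Theta_i^2 + G(r)^2\sinh^{-2}(r)|\nabla_{S^{n-1}}\Theta_i|^2$ together with the pointwise identities $\sum_{i=1}^n\Theta_i^2 = 1$ and $\sum_{i=1}^n|\nabla_{S^{n-1}}\Theta_i|^2 = n-1$ on $S^{n-1}$, the right-hand side of the above inequality assembles into purely radial integrals in $r$. A weighted Schwarz symmetrization of $\Omega$ in $\mathbb{H}^n$ preserving $|\cdot|_\phi$, combined with a hyperbolic P\'olya--Szeg\H{o}-type step exploiting the non-increasing concavity of $\phi$ and the $\sinh^{n-1}(r)$ volume density, then bounds this quantity below by the ball value $(n-1)/\sigma_{1,\phi}(B_R(o))$; equality in the rearrangement forces $\Omega$ to be isometric to $B_R(o)$. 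The main obstacle is precisely this last step: the hyperbolic weighted rearrangement demands careful use of the coarea formula with density $e^{-\phi(r)}\sinh^{n-1}(r)$, and one must verify the correct monotonicity of the radial profiles $G(r)^2$ and $G'(r)^2 + (n-1)G(r)^2\sinh^{-2}(r)$ (derived from the ODE under \textbf{Property I}) to conclude that the ball is extremal. The Hersch-type selection of the discarded direction in the second step is the other delicate piece.
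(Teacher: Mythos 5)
Your proposal correctly identifies the separation of variables, the radial ODE, and the monotonicity input from \textbf{Property~I}, but two of your three steps diverge from what the paper actually does in ways that leave genuine gaps.

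In Step~2 you centre the trial functions by subtracting constants $c_i$ and then diagonalize the boundary Gram matrix; you claim the resulting functions ``satisfy the orthogonality required by (\ref{cha-1}) at orders $1,\ldots,n-1$.'' That last implication is false: pairwise $L^{2}(\partial\Omega,\widehat{d\mu})$-orthogonality of the $w_i$ among themselves is not the same as the orthogonality to the actual eigenfunctions $u_{1,\phi},\ldots,u_{n-2,\phi}$ that (\ref{cha-1}) demands. (A Gram-matrix trace inequality $\sum_i 1/\sigma_{i,\phi}\ge\operatorname{tr}(B^{-1})$ can be justified by a separate min--max argument, but not by the reason you give.) Moreover, subtracting $c_i$ only enlarges the denominator in the Rayleigh quotient and needlessly weakens the bound. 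The paper avoids both issues by using the Brouwer fixed-point theorem (\`a la Ashbaugh--Benguria and Chen) to choose the centre $o$ so that the untouched $P_i=f(t)\psi_i$ are already orthogonal to constants and to the lower eigenfunctions. Separately, your ``discard one direction'' device to pass from $n$ terms to $n-1$ is not spelled out and is not the paper's mechanism: the paper keeps all $n$ trial functions and instead uses a Chebyshev/rearrangement inequality (the estimate (\ref{3-add-4}), taken from Chen's argument) together with $\sum_i\psi_i^2=1$ to absorb the $\sigma_{n,\phi}$ term and leave exactly $\frac{1}{n-1}\sum_{i=1}^{n-1}1/\sigma_{i,\phi}$ on the left.

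In Step~3 you invoke a weighted hyperbolic Schwarz symmetrization and a P\'olya--Szeg\H{o}-type inequality. This is not what the paper does, and it is not clear it can be made to work here: P\'olya--Szeg\H{o} controls the Dirichlet energy, whereas the quantity you must compare after the radial reduction is $\int_\Omega G\,d\mu\big/\int_\Omega H\,d\mu$, and the boundary term $\int_{\partial\Omega}T^{2}\widehat{d\mu}$ first has to be converted into a bulk integral. The paper's route is more elementary: apply the divergence theorem to $T^{2}(t)e^{-\phi}\nabla t$ (using $\langle\nabla t,\vec\eta\rangle\le1$) to get $\int_{\partial\Omega}T^{2}\widehat{d\mu}\ge\int_{\Omega}G(t)\,d\mu$, then invoke Lemma~\ref{lemma2-3} (monotonicity of $G$ and $H$, where your proposed monotone profiles coincide with $H$) and the bathtub-type comparison Lemma~\ref{lemma2-4} under the weighted-volume constraint. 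This gives $\int_\Omega G\,d\mu\ge\int_{B_R(o)}G\,d\mu$ and $\int_\Omega H\,d\mu\le\int_{B_R(o)}H\,d\mu$, with equality forcing $\Omega$ isometric to $B_R(o)$; there is no rearrangement of $\Omega$ itself. You should replace the symmetrization machinery with this direct monotone-comparison argument, and supply the Chen-type inequality to justify the passage from $n$ to $n-1$ summands.
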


Similarly, by applying the sequence (\ref{sequence-1}),  from
(\ref{II-1-1}) one has:

\begin{corollary}
Under the assumptions of Theorem \ref{theo-2}, we have
\begin{eqnarray} \label{II-2-2}
\sigma_{1,\phi}(\Omega)\leq\sigma_{1,\phi}(B_{R}(o)),
\end{eqnarray}
with equality holding if and only if $\Omega$ is isometric to
$B_{R}(o)$. That is to say, among all bounded domains in
$\mathbb{H}^n$ having the same weighted volume, the geodesic ball
$B_{R}(o)$ maximizes the first nonzero Steklov eigenvalue of the
Witten-Laplacian, provided the function $\phi$ satisfies
\textbf{Property I}.
\end{corollary}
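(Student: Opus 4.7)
The plan is to derive \eqref{II-2-2} as an immediate consequence of Theorem \ref{theo-2} combined with the non-decreasing arrangement of the spectrum given in \eqref{sequence-1}. This is the hyperbolic analogue of Corollary \ref{coro-1}, and the same two-line argument should go through verbatim, with the only meaningful change appearing in the rigidity statement, where ``equals'' must be replaced by ``is isometric to'' because one is working in $\mathbb{H}^n$ rather than $\mathbb{R}^n$.

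First, I would invoke the monotonicity
\[
0<\sigma_{1,\phi}(\Omega)\le\sigma_{2,\phi}(\Omega)\le\cdots\le\sigma_{n-1,\phi}(\Omega),
\]
from \eqref{sequence-1}, which gives $1/\sigma_{i,\phi}(\Omega)\le 1/\sigma_{1,\phi}(\Omega)$ for every $1\le i\le n-1$. Summing these $n-1$ estimates yields
\[
\frac{n-1}{\sigma_{1,\phi}(\Omega)}\;\ge\;\sum_{i=1}^{n-1}\frac{1}{\sigma_{i,\phi}(\Omega)}.
\]
Next, I would apply Theorem \ref{theo-2} to bound the right-hand side below by $(n-1)/\sigma_{1,\phi}(B_R(o))$. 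Chaining the two inequalities and inverting gives precisely $\sigma_{1,\phi}(\Omega)\le\sigma_{1,\phi}(B_R(o))$, which is \eqref{II-2-2}.

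For the rigidity: the reverse implication is trivial, since if $\Omega$ is isometric to $B_R(o)$ then obviously $\sigma_{1,\phi}(\Omega)=\sigma_{1,\phi}(B_R(o))$. For the forward direction, assume equality holds in \eqref{II-2-2}. Then both inequalities in the chain above must collapse to equalities. In particular, the Brock-type inequality \eqref{II-1-1} is saturated, and the rigidity clause of Theorem \ref{theo-2} forces $\Omega$ to be isometric to $B_R(o)$.

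There is essentially no obstacle here, since the substantive work is already contained in Theorem \ref{theo-2}; the remaining content is purely formal. The only point that requires a moment of care is tracing the rigidity through the chain of inequalities, but this is automatic because the monotonicity step cannot preclude equality in the Brock-type bound, so equality in \eqref{II-2-2} propagates back to equality in \eqref{II-1-1}, where the rigidity of Theorem \ref{theo-2} takes over.
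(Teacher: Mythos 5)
Your argument is correct and is exactly the route the paper intends: the paper derives the corollary in one line by citing the non-decreasing ordering \eqref{sequence-1} together with \eqref{II-1-1}, and your proof simply makes that chain of inequalities (and the collapse of all inequalities to equalities in the rigidity case) explicit. Nothing more is needed.
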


\begin{remark}
\rm{ (1) Similar to the explanation in (1) of \cite[Remark
1.4]{CM1}, the Euclidean $n$-space $\mathbb{R}^n$ is two-points
homogenous, so generally it seems like there is no need to point out
the information of the center for the ball $B_{R}(o)$ when
describing the isometry conclusion in Theorem \ref{theo-1}. However,
for the eigenvalue problem (\ref{a10}), by (\ref{cha-1}) one knows
that even on Euclidean balls, the Steklov eigenvalues
$\sigma_{i,\phi}$ also depend on the weighted function $\phi$
(except the situation that $\phi$ is a constant function). This
implies that for Euclidean balls with the same radius but different
centers, they might have different Steklov eigenvalues
$\sigma_{i,\phi}$ since generally the radial function $\phi$ here
has different distributions on different balls. Therefore, we need
to give the information of the center for the ball $B_{R}(o)$ when
we investigate the possible rigidity for the equality case of
(\ref{II-1}). The same situation also happens when considering the eigenvalue problem (\ref{a10}) in the hyperbolic space $\mathbb{H}^n$ -- see Theorem \ref{theo-2}. \\
(2) It is well-known that the Steklov eigenvalue problem (\ref{a11})
has discrete spectrum and all the eigenvalues (in this discrete
spectrum) can be listed non-decreasingly as follows:
\begin{eqnarray*} \label{sequence-2}
0=\sigma_{0}(\Omega)<\sigma_{1}(\Omega)\leq\sigma_{2}(\Omega)\leq\sigma_{3}(\Omega)\leq\cdots\uparrow\infty.
 \end{eqnarray*}
Clearly, if $\phi=const.$, then
$\sigma_{i,\phi}(\Omega)=\sigma_{i}(\Omega)$ for each
$i=0,1,2,\cdots$. Then by Theorem \ref{theo-1} and Corollary
\ref{coro-1}, for a ball $B_{\Omega}\subset\mathbb{R}^n$
with\footnote{~In the sequel, by the abuse of notations and without
confusion, $|\cdot|$ would stand for the Hausdorff measure of a
given geometric object.} $|B_{\Omega}|=|\Omega|$ (i.e. $B_{\Omega}$
has the same volume as $\Omega$), one directly has
\begin{eqnarray} \label{II-1-add}
\frac{1}{\sigma_{1}(\Omega)}+\frac{1}{\sigma_{2}(\Omega)}+\cdots+\frac{1}{\sigma_{n-1}(\Omega)}\geq\frac{n-1}{\sigma_{1}(B_{\Omega})}
\end{eqnarray}
 and
\begin{eqnarray} \label{II-2-add}
 \sigma_{1}(\Omega)\leq\sigma_{1}(B_{\Omega}),
\end{eqnarray}
 where the equality in (\ref{II-1-add}) or (\ref{II-2-add}) holds if and only if $\Omega$ is
 the ball $B_{\Omega}$. Moreover, the estimate
 (\ref{II-2-add}) tells us that:
  \begin{itemize}
 \item \emph{Among all bounded domains in
$\mathbb{R}^n$ having the same volume, the ball maximizes the first
nonzero Steklov eigenvalue of the Laplacian.}
  \end{itemize}
 This is exactly the reason why we usually call (\ref{II-1-add})  and
 (\ref{II-2-add}) the \emph{spectral isoperimetric inequalities}. Of
 course, in (\ref{II-1-add}) or (\ref{II-2-add}), there is no need
 to give the information of the center for $B_{\Omega}$, since for
 balls having the same radius in $\mathbb{R}^n$, they  have exactly the
 same first nonzero Steklov eigenvalue $\sigma_{1}$ of the Laplacian (corresponding to
 $\phi=const.$). \\
(3) Now, in order to show the meaning and importance of our estimate
(\ref{II-1}), we would like to recall some interesting results.  For
a simply connected domain $\Omega\subset\mathbb{R}^2$, by using
conformal mapping, Weinstock \cite{RW} in 1954 showed that
 \begin{eqnarray} \label{II-ex1}
 \sigma_{1}(\Omega)\leq\frac{2\pi}{|\partial\Omega|}=\frac{|\partial\mathbb{B}^2|}{|\partial\Omega|}\sigma_{1}(\mathbb{B}^2),
 \end{eqnarray}
where $\mathbb{B}^2$ denotes an arbitrary disk in the plane
$\mathbb{R}^2$. It is not hard to see that the quantity
$|\partial\mathbb{B}^2|\sigma_{1}(\mathbb{B}^2)$ is scale invariant.
The equality in (\ref{II-ex1}) holds if and only if $\Omega$ is a
round ball. By using the classical geometric isoperimetric
inequalities in the Euclidean space (see, e.g., \cite[Chapter
1]{RS}), one can obtain directly from the estimate (\ref{II-ex1})
that the spectral isoperimetric inequality (\ref{II-2-add}) holds
for simply connected planar domains. 14 years later, Hersch and
Payne \cite{HP} found that from Weinstock's proof, one can get a
more sharper estimate
 \begin{eqnarray}  \label{II-ex2}
\frac{1}{\sigma_{1}(\Omega)}+\frac{1}{\sigma_{2}(\Omega)}\geq\frac{2|\partial\mathbb{B}^2|}{|\partial\Omega|}\sigma_{1}(\mathbb{B}^2)
 \end{eqnarray}
for simply connected domain $\Omega\subset\mathbb{R}^2$. By using
the classical geometric isoperimetric inequalities in
$\mathbb{R}^2$, it follows from (\ref{II-ex2}) that
 \begin{eqnarray}  \label{II-ex3}
\frac{1}{\sigma_{1}(\Omega)}+\frac{1}{\sigma_{2}(\Omega)}\geq\frac{2}{\sigma_{1}(B_{\Omega})}
\end{eqnarray}
 holds for any simply connected domain $\Omega\subset\mathbb{R}^2$,
 where $B_{\Omega}\subset\mathbb{R}^2$ is a disk satisfying
 $|B_{\Omega}|=|\Omega|$. The isoperimetric inequality
 (\ref{II-ex3}) has been generalized to higher dimensional case in
 2001 by Brock \cite{FB}. In fact, for a bounded domain $\Omega$ (with
 Lipschitz boundary $\partial\Omega$) in $\mathbb{R}^n$, Brock
  considered the eigenvalue problem (\ref{a11}) with the
 boundary condition replaced by
  \begin{eqnarray}  \label{BC-1}
 \frac{\partial u}{\partial \vec{\eta}}=\sigma\rho
u~~\mathrm{on}~~\partial \Omega,
  \end{eqnarray}
where $\rho\in L^{\infty}(\partial\Omega)$ is a given nonnegative
 function on $\partial\Omega$, and he \cite[Theorem 1]{FB}
proved that
 \begin{eqnarray}  \label{II-ex4}
\frac{1}{\sigma_{1}(\Omega)}+\frac{1}{\sigma_{2}(\Omega)}+\cdots+\frac{1}{\sigma_{n-1}(\Omega)}+\frac{1}{\sigma_{n}(\Omega)}\geq
n\bar{\rho}R_{B_{\Omega}},
 \end{eqnarray}
 where $R_{B_{\Omega}}$ is the radius of a ball
 $B_{\Omega}\subset\mathbb{R}^{n}$ satisfying the volume constraint
 $|B_{\Omega}|=|\Omega|$, and $\bar{\rho}$ is given by
  \begin{eqnarray}  \label{r-def}
\bar{\rho}=\frac{nw_{n}(R_{B_{\Omega}})^{n-1}}{\int_{\partial\Omega}\rho^{-1}\widehat{dv}},
  \end{eqnarray}
with $w_{n}$ the volume of the unit ball in $\mathbb{R}^n$. The
equality in (\ref{II-ex4}) is attained if $\Omega$ is a ball and if
$\rho$ is constant on $\partial\Omega$. Clearly, if $\rho\equiv1$,
then from (\ref{r-def}) one has in this setting that
$\bar{\rho}=nw_{n}(R_{B_{\Omega}})^{n-1}/|\partial\Omega|$, and then
(\ref{II-ex4}) becomes
 \begin{eqnarray}   \label{II-ex5}
\frac{1}{\sigma_{1}(\Omega)}+\frac{1}{\sigma_{2}(\Omega)}+\cdots+\frac{1}{\sigma_{n-1}(\Omega)}+\frac{1}{\sigma_{n}(\Omega)}\geq
nR_{B_{\Omega}}\frac{|\partial
B_{\Omega}|}{|\partial\Omega|}=\frac{n}{\sigma_{1}(B_{\Omega})}\frac{|\partial
B_{\Omega}|}{|\partial\Omega|}.
 \end{eqnarray}
Moreover, when $\rho\equiv1$, the boundary condition (\ref{BC-1})
degenerates into the classical one used in the Steklov eigenvalue
problem (\ref{a11}). In fact, from mathematicians' viewpoint, there
might be no big difference between  the boundary condition
(\ref{BC-1}) and the one used in (\ref{a11}). However, the quantity
$\int_{\partial\Omega}\rho\widehat{dv}$ is called the \emph{total
mass} and it do has physical meaning when the system
\begin{eqnarray} \label{a12}
\left\{\begin{array}{ll} \Delta u
=0&~~\mathrm{in} ~~\Omega\subset \mathbb{R}^{n}, \\[2mm]
\frac{\partial u}{\partial \vec{\eta}}=\sigma\rho
u&~~\mathrm{on}~~\partial \Omega
\end{array}\right.
\end{eqnarray}
is used to describe some physical phenomenon. This is the reason why
some mathematicians insist on using the boundary condition
(\ref{BC-1}) rather than the classical one for the Steklov
eigenvalue problem of the Laplacian. As said by Brock \cite[page
70]{FB} that \emph{if $\rho$ is constant, then a slight modification
of the proof of } the estimate (\ref{II-ex4}) would lead to the
result that
 \begin{eqnarray}   \label{II-ex6}
\frac{1}{\sigma_{1}(\Omega)}+\frac{1}{\sigma_{2}(\Omega)}+\cdots+\frac{1}{\sigma_{n-1}(\Omega)}+\frac{1}{\sigma_{n}(\Omega)}\geq
\frac{n}{\sigma_{1}(B_{\Omega})}
 \end{eqnarray}
holds for\footnote{~For the boundary condition (\ref{BC-1}), it is
easy to see that there is no essential difference between
$\rho=const.$ and $\rho\equiv1$.} $\rho\equiv1$. It is not hard to
see that the estimate (\ref{II-ex6}) is sharper than the estimate
(\ref{II-ex4}) for the case $\rho\equiv1$. This is because under the
fixed volume constraint $|B_{\Omega}|=|\Omega|$, by the classical
geometric isoperimetric inequality in $\mathbb{R}^n$, one has
$|\partial B_{\Omega}|\leq|\partial\Omega|$, with equality if and
only if $\Omega$ is a ball congruent to $B_{\Omega}$. Based on the
above two reasons, in this paper we just look at the classical
boundary condition $\frac{\partial u}{\partial \vec{\eta}}=\sigma u$
for the Steklov-type eigenvalue problem (\ref{a10}) such that our
spectral isoperimetric inequalities (\ref{II-1}) and (\ref{II-1-1})
appear as their present versions which directly imply the nice
spectral isoperimetric inequalities (\ref{II-2}) and (\ref{II-2-2}).
Actually, one can also consider a slightly general modification to
the problem (\ref{a10}) as follows
\begin{eqnarray} \label{a10-add}
\left\{\begin{array}{ll} \L u
=0&~~\mathrm{in} ~~\Omega\subset M^{n}, \\[2mm]
\frac{\partial u}{\partial \vec{\eta}}=\sigma\rho
u&~~\mathrm{on}~~\partial \Omega,
\end{array}\right.
\end{eqnarray}
with $\rho\in L^{\infty}(\partial\Omega)$ a given nonnegative
function on $\partial\Omega$, and similar to the situation happened
in \cite{FB}, under suitable assumptions on the weighted function
$\phi$ and the given function $\rho$, (by making a slight
modification to the argument we have shown in Section \ref{S3}) a
lower bound involving an integration of $\rho$, $\phi$ and
$\sigma_{1}(B_{\Omega})$ would be obtained for
$\sum_{i=1}^{n-1}\frac{1}{\sigma_{i,\phi}(\Omega)}$ when the given
Riemnnain $n$-manifold $M^{n}$ is chosen to be $\mathbb{R}^n$ or a
hyperbolic $n$-space. The second named author, my student S. J.
Zhang, will show the details of this fact in his dissertation
\cite{SJZ}. \\
(4) Obviously, by using the monotonicity of Steklov eigenvalues of
the Laplacian, for any bounded domain $\Omega\subset\mathbb{R}^{n}$
with Lipschitz boundary, (under the volume constraint
$|\Omega|=|B_{\Omega}|$) the spectral isoperimetric inequality
(\ref{II-2-add}) can be also obtained from the estimate
(\ref{II-ex6}). However, since on \cite[page 70]{FB} there is no
rigidity description for the equality case in (\ref{II-ex6}), one
cannot get the rigidity result for the estimate (\ref{II-2-add}) by
directly using (\ref{II-ex6}). \\
(5) A natural question is:
 \begin{itemize}
 \item \emph{Whether or not the spectral isoperimetric
inequality (\ref{II-2-add}) could still hold in the situation that
$M^{n}$ is a space form or a general Riemannian $n$-manifold?}
 \end{itemize}
 There were a few of results have been obtained. For instance, when
 $M^{n}$ was chosen to be a $2$-dimensional space form
 $\mathbb{M}^{2}(\kappa)$ (with constant sectional curvature
 $\kappa$), Escobar \cite[Theorem 7]{JFE} showed the validity of the spectral isoperimetric
inequality (\ref{II-2-add}); when $M^{n}$ was chosen to be a
non-compact rank one symmetric space such that the sectional
curvature $\mathrm{Sec}(M^{n})$ satisfies
$\mathrm{Sec}(M^{n})\in[-4,-1]$, the validity of (\ref{II-2-add})
has also been obtained in \cite[Theorem 1.1]{RS}.  Quantitative
versions of (\ref{II-2-add}) were also obtained by Brasco-De
Philippis-Ruffini \cite{BPR} for the Euclidean space and by
Castillon-Ruffini \cite{CR} for the non-compact rank one symmetric
space. Recently, Chen \cite{HC} has made an interesting progress on
this question, that is,
\begin{itemize}
\item Let $M^{n}$ be an $n$-dimensional complete simply connected Riemannian
manifold whose sectional curvature and Ricci curvature separately
satisfy $\mathrm{Sec}(M^{n})\leq\kappa$ and $\mathrm{Ric}(M^{n})\geq
K$, with $K\leq\kappa\leq0$, and let $\Omega\subset M^{n}$ be a
bounded domain with smooth boundary $\partial\Omega$. Then for the
Steklov eigenvalue problem (\ref{a11}), he \cite[Theorem 1.2 and
Corollary 1.3]{HC} showed that the inequality
\begin{eqnarray} \label{II-ex7}
\frac{1}{\sigma_{1}(\Omega)}+\frac{1}{\sigma_{2}(\Omega)}+\cdots+\frac{1}{\sigma_{n-1}(\Omega)}\geq\frac{1}{C^2}\frac{n-1}{\sigma_{1}(B_{\Omega})}
\end{eqnarray}
holds, where $B_{\Omega}$ is the geodesic ball in the space form
$\mathbb{M}^{n}(\kappa)$ satisfying the volume constraint
$|B_{\Omega}|=|\Omega|$, $C:=C(n,K,\kappa,\mathrm{diam}(\Omega))$ is
a positive constant defined by (1.12) of \cite{HC} and depending on
$n$, $K$, $\kappa$, the diameter $\mathrm{diam}(\Omega)$ of the
domain $\Omega$. Specially, as pointed out in \cite[Corollary
1.4]{HC}, when $M^{n}$ was chosen to be $\mathbb{R}^n$ or
$\mathbb{H}^{n}$, $C\equiv1$ and then the estimate (\ref{II-ex7})
becomes
\begin{eqnarray} \label{II-ex8}
\frac{1}{\sigma_{1}(\Omega)}+\frac{1}{\sigma_{2}(\Omega)}+\cdots+\frac{1}{\sigma_{n-1}(\Omega)}\geq\frac{n-1}{\sigma_{1}(B_{\Omega})}.
\end{eqnarray}
Moreover, the equality in (\ref{II-ex8}) if and only if $\Omega$ is
isometric to $B_{\Omega}$.
 \end{itemize}
It is not hard to see that our Brock-type spectral isoperimetric
inequalities (\ref{II-1}), (\ref{II-1-1}) in Theorems \ref{theo-1}
 and \ref{theo-2} can be seen as a direct weighted version of
 (\ref{II-ex8}). One might ask whether or not one could improve our
 main results of this paper in $\mathbb{R}^n$ and $\mathbb{H}^n$ to
 curved spaces with the curvature pinching assumption imposed as in
 \cite{HC}. The answer should be affirmative and we prefer to leave
 this attempt to readers who have interest in this topic.
 The reason that we think our
 main results of this paper in $\mathbb{R}^n$ and $\mathbb{H}^n$ can
 be improved to curved spaces is the following: in \cite{CM1}, Chen
 and Mao have proven an isoperimetric inequality for lower order nonzero Neumann
eigenvalues of the Witten-Laplacian on bounded domains in a
Euclidean space or a hyperbolic space; by making extra suitable
assumption on the symmetrized correspondence of the weighted
function $\phi$ (under the constraint of fixed weighted volume)
through the Schwarz symmetrization, the main conclusions in
\cite{CM1} can be extended to curved spaces $M^{n}$ with curvature
pinching $\mathrm{Sec}(M^{n})\leq\kappa$ and
$\mathrm{Ric}(M^{n})\geq K$, $K\leq\kappa\leq0$ (see \cite{DCM}).
Based on the relation between the Neumann eigenvalues and the
Steklov eigenvalues of the Witten-Laplacian, we affirm the
possibility of improving our
 main results of this paper in $\mathbb{R}^n$ and $\mathbb{H}^n$ to
 curved spaces with suitable curvature pinching.
 \\
(6) Based on the deriving process of our main conclusions in
Theorems \ref{theo-1} and \ref{theo-2}, together with the existence
of Brock's isoperimetric inequality (\ref{II-ex6}) in
$\mathbb{R}^n$, we would like to propose the following open problem,
which we think it should be suitable to call it \emph{\textbf{the
Brock-type conjecture} about lower Steklov-type eigenvalues of the
Witten-Laplacian}.
\begin{itemize}
\item  \textbf{Question A}. Consider the eigenvalue problem
(\ref{a10}) with choosing $M^n$ to be $M^{n}=\mathbb{R}^n$ (or
$M^{n}=\mathbb{H}^n$), and assume that the function $\phi$ satisfies
\textbf{Property I}. Let $\Omega$ be a bounded domain with Lipschitz
continuous boundary in
 $\mathbb{R}^n$ (or $\mathbb{H}^n$), and let $B_{R}(o)$ be a (geodesic) ball of radius $R$ and centered at the
 origin $o$
 of
 $\mathbb{R}^{n}$ (or $\mathbb{H}^n$) such that $|\Omega|_{\phi}=|B_{R}(o)|_{\phi}$,
 i.e. $\int_{\Omega}d\eta=\int_{B_{R}(o)}d\eta$.  Then
  \begin{eqnarray*}
\frac{1}{\sigma_{1,\phi}(\Omega)}+\frac{1}{\sigma_{2,\phi}(\Omega)}+\cdots+\frac{1}{\sigma_{n-1,\phi}(\Omega)}+\frac{1}{\sigma_{n,\phi}(\Omega)}\geq\frac{n}{\sigma_{1,\phi}(B_{R}(o))}.
\end{eqnarray*}
The equality case holds if and only if $\Omega$ is the ball
$B_{R}(o)$ (or $\Omega$ is isometric to the geodesic ball
$B_{R}(o)$).
\end{itemize}
 }
\end{remark}

The paper is organized as follows. In Section \ref{S2}, we give
several properties related to the first nonzero Steklov eigenvalues
and their eigenfunctions of the Witten-Laplacian on geodesic balls
in the space forms.  A proof of Theorems \ref{theo-1} and
\ref{theo-2} will be given in  Section \ref{S3}.

\section{Several useful facts}  \label{S2}
\renewcommand{\thesection}{\arabic{section}}
\renewcommand{\theequation}{\thesection.\arabic{equation}}
\setcounter{equation}{0}

In this section, we need several facts which make an important role
in the derivation of main conclusions in  Theorems \ref{theo-1} and
\ref{theo-2}. First, we need:

\begin{theorem} \label{theorem2-1}
 Let
$\mathbb{M}^{n}(\kappa)$ be an $n$-dimensional complete simply
connected Riemannian manifold of constant sectional curvature
$\kappa\in\{1,0,-1\}$, and let $B_{R}(o)$ be a geodesic ball
centered at the base point $o\in\mathbb{M}^{n}(\kappa)$ and with
radius $R$. Assume that $\phi$ is a radial function w.r.t. the
distance parameter $t:=d(o,\cdot)$. Then the first nonzero Steklov
eigenvalue $\sigma_{1,\phi}(B_{R}(o))$  of the Witten-Laplacian on
$B_{R}(o)$ is given by
 \begin{eqnarray*}
\sigma_{1,\phi}(B_{R}(o))=\frac{\int_{B_{R}(o)}\left[T^{2}(t)v_{1}(\partial
B_{R}(o))+(T'(t))^{2}\right]d\mu}{T^{2}(R)|\partial
B_{R}(o)|_{\phi}},
\end{eqnarray*}
 and the functions $T(t)\frac{x_{j}}{t}$, $j=1,2,\cdots,n$, are eigenfunctions corresponding to the eigenvalue
 $\sigma_{1,\phi}(B_{R}(o))$, where $x_{j}$,
$j=1,2,\cdots,n$, are coordinate functions of the globally defined
orthonormal coordinate system set up in $\mathbb{M}^{n}(\kappa)$,
$v_{1}(\partial B_{R}(o))$ is the first nonzero closed eigenvalue of
the Laplacian on the sphere $\partial B_{R}(o)$, $|\partial
B_{R}(o)|_{\phi}$ denotes the weighted measure of $\partial
B_{R}(o)$, and $T(t)$ satisfies
\begin{eqnarray}  \label{2-1}
\left\{
\begin{array}{ll}
T''+\left(\frac{(n-1)C_{\kappa}}{S_{\kappa}}-\phi'\right)T'-(n-1)S_{\kappa}^{-2}T=0,\\[1mm]
T(0)=0,~T'(0)=1.
\end{array}
\right.
\end{eqnarray}
 Here $C_{\kappa}(t)=\left(S_{\kappa}(t)\right)'$ and
 \begin{eqnarray*}
S_{\kappa}(t)=\left\{
\begin{array}{lll}
\sin t,~~&\mathrm{if}~\mathbb{M}^{n}(\kappa)=\mathbb{S}^{n}_{+}, \\
t,~~&\mathrm{if}~\mathbb{M}^{n}(\kappa)=\mathbb{R}^{n},\\
\sinh t,~~&\mathrm{if}~\mathbb{M}^{n}(\kappa)=\mathbb{H}^{n},
\end{array}
\right.
 \end{eqnarray*}
  with $\mathbb{S}^{n}_{+}$ the $n$-dimensional hemisphere of radius $1$.
\end{theorem}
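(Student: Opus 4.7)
My plan is to carry out a separation of variables in geodesic polar coordinates $(t,\theta)$ centered at $o$, in which the metric of $\mathbb{M}^n(\kappa)$ reads $dt^2 + S_\kappa^2(t)\,d\theta^2$. Since $\phi$ is radial, a direct computation yields $\L u = u_{tt} + \bigl((n-1)C_\kappa/S_\kappa - \phi'(t)\bigr)\, u_t + S_\kappa^{-2}(t)\,\Delta_{S^{n-1}} u$, so trying $u = f(t)\,Y(\theta)$ with $\Delta_{S^{n-1}} Y = -\lambda Y$ reduces $\L u = 0$ to the ODE $f'' + \bigl((n-1)C_\kappa/S_\kappa - \phi'\bigr) f' - (\lambda/S_\kappa^2)\, f = 0$. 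A Frobenius analysis at the singular endpoint $t = 0$ (with indicial roots $\alpha = k$ and $\alpha = -(n-2+k)$ when $\lambda = k(k+n-2)$) shows that for each $\lambda_k = k(k+n-2)$, $k \geq 0$, this ODE admits, up to scale, a unique smooth solution $f_k$ with leading behaviour $f_k(t)\sim t^k$; for $\lambda = n-1$ it is precisely (\ref{2-1}), so $f_1 = T$.

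For $k = 1$ a basis of spherical harmonics on $S^{n-1}$ is given by $\theta_j := x_j/t$, $j=1,\ldots,n$, and hence $u_j := T(t)\,\theta_j$ satisfies $\L u_j = 0$ by construction, while $T(0)=0$ ensures $u_j$ extends smoothly across $o$. Since $\vec{\eta} = \partial_t$ on $\partial B_R(o)$, the Steklov boundary condition becomes $\partial_{\vec{\eta}} u_j = T'(R)\,\theta_j = \bigl(T'(R)/T(R)\bigr)\, u_j$, so each $u_j$ is a Steklov-type eigenfunction of $\L$ with the common eigenvalue $T'(R)/T(R)$. The radiality of $\phi$ ensures $\int_{\partial B_R(o)}\theta_j\,\widehat{d\mu} = 0$ by antipodal symmetry, so $u_j$ is admissible in the variational characterization (\ref{cha-2}).

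To recover the Rayleigh-quotient expression in the theorem, I insert $u_j$ into (\ref{cha-2}) and sum over $j$. The polar splitting of the gradient gives $|\nabla u_j|^2 = (T'(t))^2\,\theta_j^2 + S_\kappa^{-2}(t)\,T^2(t)\,|\nabla_{S^{n-1}}\theta_j|^2$, and the pointwise identities $\sum_{j=1}^n \theta_j^2 \equiv 1$ and $\sum_{j=1}^n |\nabla_{S^{n-1}}\theta_j|^2 \equiv n-1$ collapse the summed numerator to $\int_{B_R(o)}\bigl[(T'(t))^2 + (n-1)T^2(t)/S_\kappa^2(t)\bigr]\,d\mu$ and the summed denominator to $T^2(R)\,|\partial B_R(o)|_\phi$. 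Identifying $(n-1)/S_\kappa^2(\cdot)$ with the first nonzero closed Laplace eigenvalue $v_1$ of the corresponding geodesic sphere then produces the advertised formula.

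The main obstacle is to confirm that $T'(R)/T(R)$ is actually $\sigma_{1,\phi}(B_R(o))$. Since the Dirichlet-to-Neumann operator $\mathcal{D}$ on $\partial B_R(o)$ diagonalises over the spherical-harmonic basis (with $k=0$ contributing only the trivial eigenvalue $0$), every Steklov-type eigenvalue is of the form $f_k'(R)/f_k(R)$ for some $k \geq 1$, and the task reduces to showing this ratio is strictly increasing in $k$. I plan to establish this via a Riccati comparison for $V_k := f_k'/f_k$, whose ODE $V_k' + V_k^2 + \bigl((n-1)C_\kappa/S_\kappa - \phi'\bigr) V_k - \lambda_k/S_\kappa^2 = 0$ differs from $V_{k-1}$'s only in the source term $\lambda_k/S_\kappa^2$; since $V_k(t) \sim k/t$ near $t = 0$, the initial strict ordering $V_k > V_{k-1}$ is preserved up to $t = R$ by a standard first-crossing argument (at a putative first equality, the source term forces $V_k' > V_{k-1}'$, a contradiction). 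This Riccati comparison is the most delicate technical step.
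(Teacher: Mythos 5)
Your proposal is correct and follows essentially the same route as the paper: separation of variables in geodesic polar coordinates, identification of the candidate eigenfunctions $T(t)x_j/t$, the Rayleigh-quotient computation with the sum-over-$j$ trick, and a monotonicity-in-$k$ argument to confirm that the $k=1$ mode yields $\sigma_{1,\phi}$. Your explicit Riccati first-crossing argument is a concrete implementation of what the paper simply invokes as the ``Sturm–Liouville comparison theorem,'' and your form of the numerator with $(n-1)T^2(t)/S_\kappa^2(t)$ as a function of $t$ is in fact the version the paper actually uses in Section 3 (the theorem statement's $v_1(\partial B_R(o))$ is an abuse of notation for the $t$-dependent sphere eigenvalue).
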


\begin{remark}
\rm{ In \cite[Remark 1.4]{CM}, the concept ``\emph{base point}" has
been introduced clearly for spherically symmetric manifolds, which
were called \emph{generalized space forms} by Katz-Kondo \cite{KK},
and readers who are interested in this concept can check there for
details. As also explained clearly in \cite[Remarks 1.4 and 1.6]{CM}
that: (1) when $\mathbb{M}^{n}(\kappa)$ is chosen to be
$\mathbb{R}^n$ or $\mathbb{H}^n$, since in this case the space is
two-point homogenous, any point in the space can be seen as the base
point which leads to the situation that for convenience, we need to
additionally require that $o$ is the origin of the globally defined
coordinate system; (2) when $\mathbb{M}^{n}(\kappa)$ is chosen to be
$\mathbb{S}^{n}_{+}$, the base point $o$ is uniquely determined such
that $\mathbb{S}^{n}_{+}$ is spherically symmetric w.r.t. the base
point. }
\end{remark}

\begin{proof}
Let $\{G_{i}\}_{i=0}^{\infty}$ be a complete orthogonal
 set consisting of
eigenfunctions of the Laplacian on the sphere $\partial B_{R}(o)$
with associated closed eigenvalues $v_{i}(\partial
B_{R}(o))=i(i+n-2)/R^2$, $i=0,1,2,\cdots$. Let
$T:=T_{1},T_{2},\cdots$ be radial functions corresponding to
$v_{i}(\partial B_{R}(o))$ and satisfying the following system
 \begin{eqnarray}  \label{ODE-21}
\left\{
\begin{array}{ll}
\frac{(n-1)C_{\kappa}(t)}{S_{\kappa}(t)}T'_{i}(t)+T''_{i}(t)-\frac{v_{i}}{S_{\kappa}^{2}(t)}T_{i}(t)-\phi'(t)
T'_{i}(t)=0, ~~~t\in(0,R) \\ [2mm]
 T_{i}(0)=0, ~~T'_{i}(R)=\beta_{i}T_{i}(R),
\end{array}
\right.
 \end{eqnarray}
where $\beta_{i}$ are eigenvalues determined later. Let
$u_{0,\phi}=1$ and $u_{i,\phi}(t,\xi)=T_{i}(t)G_{i}(\xi)$,
$\xi\in\mathbb{S}^{n-1}$ with $\mathbb{S}^{n-1}$ the
$(n-1)$-dimensional unit Euclidean
 sphere, $i=1,2,\cdots$. Then the set
$\{u_{i,\phi}\}_{i=0}^{\infty}$ forms an orthogonal basis of
$\widetilde{L}^2(B_{R}(o))$ (i.e., the set of all $L^2$ integrable
functions defined over $B_{R}(o)$ w.r.t. the weighted density
$d\mu$).

Noticing from (\ref{ODE-21}) that $\L u_{i,\phi}=0$ on $B_{R}(o)$
for all $i\geq 1$, we have by applying the divergence theorem that
 \begin{eqnarray} \label{2-3}
\int_{\partial B_{R}(o)}u_{i,\phi}\widehat{d\mu}&=&\int_{\partial B_{R}(o)}T_{i}G_{i}\widehat{d\mu}\nonumber\\
&=&T_{i}(R)e^{-\phi(R)}\int_{\partial B_{R}(o)}G_{i}\widehat{dv}\nonumber\\
&=&-\frac{T_{i}(R)}{v_{i}}e^{-\phi(R)}\int_{\partial B_{R}(o)}\Delta_{\partial B_{R}(o)}G_{i}\widehat{dv}\nonumber\\
&=&0,
 \end{eqnarray}
where $\Delta_{\partial B_{R}(o)}$ denotes the Laplacian on the
geodesic sphere $\partial B_{R}(o)$.

On the other hand, by a direct calculation, and together with the
divergence theorem, one has
 \begin{eqnarray}  \label{2-4}
&&\int_{B_{R}(o)}|\nabla u_{i,\phi}|^{2}d\mu \nonumber\\
&=&\int_{B_{R}(o)}\mathrm{div}(u_{i,\phi}\nabla u_{i,\phi}e^{-\phi})dv-\int_{B_{R}(o)}u_{i,\phi}\Delta u_{i,\phi}d\mu-\int_{B_{R}(o)}u_{i,\phi}\nabla u_{i,\phi}\nabla e^{-\phi}dv\nonumber\\
&=&\int_{\partial B_{R}(o)}u_{i,\phi}\frac{\partial u_{i,\phi}}{\partial\vec{\eta}}\widehat{d\mu}-\int_{B_{R}(o)}u_{i,\phi}\Delta u_{i,\phi}d\mu+\int_{B_{R}(o)}u_{i,\phi}\nabla u_{i,\phi}\nabla\phi d\mu\nonumber\\
&=&\int_{\partial B_{R}(o)}u_{i,\phi}\frac{\partial u_{i,\phi}}{\partial\vec{\eta}}\widehat{d\mu}-\int_{B_{R}(o)}u_{i,\phi}\L u_{i,\phi}d\mu\nonumber\\
&=&\int_{\partial B_{R}(o)}T_{i}G_{i}\frac{\partial T_{i}G_{i}}{\partial t}\widehat{d\mu}\nonumber\\
&=&e^{-\phi(R)}T_{i}(R)\int_{\partial B_{R}(o)}G_{i}\langle\nabla T_{i}G_{i},\partial_{t}\rangle \widehat{dv}\nonumber\\
&=&e^{-\phi(R)}T_{i}(R)\int_{\partial B_{R}(o)}\left(G_{i}^{2}\langle\nabla T_{i},\partial_{t}\rangle+T_{i}G_{i}\langle\nabla G_{i},\partial_{t}\rangle\right)\widehat{dv}\nonumber\\
&=&e^{-\phi(R)}T_{i}(R)\int_{\partial B_{R}(o)}G_{i}^{2}\langle\nabla T_{i},\partial_{t}\rangle \widehat{dv}\nonumber\\
&=&e^{-\phi(R)}T_{i}(R)T'_{i}(R)\int_{\partial
B_{R}(o)}G_{i}^{2}\widehat{dv},
 \end{eqnarray}
 where $\mathrm{div}$ is the divergence operator, and $\partial_{t}$ denotes the radial tangent vector induced by
 the distance parameter $t=d(\cdot,o)$. Therefore, by the
 variational
 characterizations (\ref{cha-1})-(\ref{cha-2}), and together with (\ref{2-3})-(\ref{2-4}), we can obtain
 \begin{eqnarray}  \label{2-5}
\frac{\int_{B_{R}(o)}|\nabla u_{i,\phi}|^{2}d\mu}{\int_{\partial
B_{R}(o)}u_{i,\phi}^{2}\widehat{d\mu}}=\frac{e^{-\phi(R)}T_{i}(R)T'_{i}(R)\int_{\partial
B_{R}(o)}G_{i}^{2}\widehat{dv}}{T_{i}^{2}(R)e^{-\phi(R)}\int_{\partial
B_{R}(o)}G_{i}^{2}\widehat{dv}}
=\frac{T'_{i}(R)}{T_{i}(R)}=\beta_{i}.
 \end{eqnarray}
 However, by a direct calculation, we also have
  \begin{eqnarray}  \label{2-6}
|\nabla u_{i,\phi}|^{2}&=&|\nabla^{\mathbb{S}^{n-1}(t)}u_{i,\phi}|^{2}+\langle\nabla u_{i,\phi},\partial_{t}\rangle^{2}\nonumber\\
&=&|\nabla^{\mathbb{S}^{n-1}(t)}(T_{i}G_{i})|^{2}+G_{i}^{2}(T'_{i})^{2}\nonumber\\
&=&T_{i}^{2}|\nabla^{\mathbb{S}^{n-1}(t)}G_{i}|^{2}+G_{i}^{2}(T'_{i})^{2}\nonumber\\
&=&T_{i}^{2}\left(\frac{1}{2}\L G_{i}^{2}-G_{i}\L G_{i}\right)+G_{i}^{2}(T'_{i})^{2}\nonumber\\
&=&T_{i}^{2}\left(\frac{1}{2}\Delta_{\phi}G_{i}^{2}+v_{i}(\partial
B_{R}(o))G_{i}^{2}\right)+G_{i}^{2}(T'_{i})^{2},
  \end{eqnarray}
where $\nabla^{\mathbb{S}^{n-1}(t)}$ denotes the gradient operator
on $\mathbb{S}^{n-1}(t)$, with $\mathbb{S}^{n-1}(t)$ the Euclidean
$(n-1)$-sphere of radius $t$. Since
$$\frac{1}{2}\int_{B_{R}(o)}\L(G_{i}^{2})dv=\int_{\partial
B_{R}(o)}G_{i}\langle\nabla
G_{i},\partial_{t}\rangle\widehat{dv}=0,$$ one can get from
(\ref{2-6}) that
 \begin{eqnarray}  \label{2-7}
\beta_{i}&=&\frac{\int_{B_{R}(o)}|\nabla
u_{i,\phi}|^{2}d\mu}{\int_{\partial
B_{R}(o)}u_{i,\phi}^{2}\widehat{d\mu}}\nonumber\\
&=&\frac{\int_{B_{R}(o)}\left[T_{i}^{2}\left(\frac{1}{2}\L G_{i}^{2}+v_{i}(\partial B_{R}(o))G_{i}^{2}\right)+G_{i}^{2}(T'_{i})^{2}\right]d\mu}{\int_{\partial B_{R}(o)}u_{i,\phi}^{2}\widehat{d\mu}}\nonumber\\
&=&\frac{\int_{B_{R}(o)}\left(T_{i}^{2}v_{i}G_{i}^{2}+G_{i}^{2}(T'_{i})^{2}\right)d\mu}{\int_{\partial
B_{R}(o)}u_{i,\phi}^{2}\widehat{d\mu}}.
 \end{eqnarray}
Since $v_{i}(\partial B_{R}(o))\geq v_{1}(\partial B_{R}(o))$ for
$i\geq 1$, by applying the Strum-Liouville comparison theorem  to
(\ref{2-7}) we have $\beta_{i}\geq\beta_{1}$.  Meanwhile, all the
admissible functions in the variational characterization of
$\sigma_{1}(B_{R}(o))$ are orthogonal to nonzero constant functions
on $\partial B_{R}(o)$. So, $\sigma_{1}(B_{R}(o))=\beta_{1}$.
Consider the geodesic normal coordinate system $\{x_{i}\}_{i=1}^{n}$
 centered at $o$, and then $\frac{x_{j}}{t}$, $j=1,2,\cdots,n$, are eigenfunctions
 corresponding to the first nonzero eigenvalue $v_{1}(\partial B_{R}(o))$ of the geodesic sphere $\partial
 B_{R}(o)$. Therefore, from (\ref{2-7}) we can obtain
 \begin{eqnarray*}
\sigma_{1,\phi}(B_{R}(o))\int_{\partial
B_{R}(o)}T^{2}\left(\frac{x_{j}}{t}\right)^{2}\widehat{d\mu}=\int_{B_{R}(o)}\left[T^{2}v_{1}(\partial
B_{R}(o))\left(\frac{x_{j}}{t}\right)^{2}+\left(\frac{x_{j}}{t}\right)^{2}(T')^{2}\right]d\mu.
 \end{eqnarray*}
Summing both sides of the above equality over the index $j$ from $1$
to $n$ yields
 \begin{eqnarray*}
\sigma_{1,\phi}(B_{R}(o))&=&\sum_{j=1}^{n}\frac{\int_{B_{R}(o)}\left[T^{2}(t)v
_{1}(\partial
B_{R}(o))(\frac{x_{j}}{t})^{2}+(\frac{x_{j}}{t})^{2}(T')^{2}\right]d\mu}{\int_{\partial
B_{R}(o)}T^{2}(t)\left(\frac{x_{j}}{t}\right)^{2}\widehat{d\mu}}\\
&=&\frac{\int_{B_{R}(o)}\left[T^{2}(t)v_{1}(\partial
B_{R}(o))+(T')^{2}(t)\right)d\mu}{\int_{\partial
B_{R}(o)}T^{2}(t)\widehat{d\mu}}\\
&=&\frac{\int_{B_{R}(o)}\left[T^{2}(t)v_{1}(\partial
B_{R}(o))+(T')^{2}(t)\right)d\mu}{T^{2}(R)|\partial
B_{R}(o)|_{\phi}}
 \end{eqnarray*}
which implies the first assertion of Theorem \ref{theorem2-1}. The
derivation of the system (\ref{2-1}) follows from (\ref{ODE-21}),
$T=T_{1}$, (\ref{2-5}) and $\sigma_{1}(B_{R}(o))=\beta_{1}$
directly.
\end{proof}

We also need the following monotone property.

\begin{lemma}  \label{lemma2-3}
Let $T(t)$ be the solution to the system (\ref{2-1}). Assume that
$\kappa=0$ or $\kappa=-1$ (i.e. $\mathbb{M}^{n}(\kappa)$ was chosen
to be $\mathbb{R}^{n}$ or $\mathbb{H}^{n}$) and the weighted
function $\phi(t)$ satisfies $\phi'(t)\leq 0$ and $\phi''(t)\leq 0$.
Define two radial functions $G$ and $H$ as follows
\begin{eqnarray}
&&G(t):=\left(T^{2}(t)\right)'+(n-1)\frac{C_{\kappa}(t)}{S_{\kappa}(t)}T^{2}(t)-T^{2}(t)\phi'(t), \label{2-8}\\
&&H(t):=(T'(t))^{2}+(n-1)\frac{T^{2}(t)}{S_{\kappa}^{2}(t)}.
\label{2-9}
\end{eqnarray}
Then $G$ is nonnegative and non-decreasing on $[0,+\infty)$, and $H$
is nonnegative and non-increasing on $[0,+\infty)$.
\end{lemma}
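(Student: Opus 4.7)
The plan is a direct calculation: differentiate $G$ and $H$, use the ODE (\ref{2-1}) to eliminate $T''$, and then check signs from the hypotheses $\phi'\le 0$, $\phi''\le 0$, and $\kappa\in\{0,-1\}$. The one algebraic identity I will lean on is $(C_\kappa/S_\kappa)'=-1/S_\kappa^2$, which follows from $C_\kappa'=-\kappa S_\kappa$ and the Pythagorean relation $C_\kappa^2+\kappa S_\kappa^2=1$; both are uniform for $\kappa\in\{0,-1\}$.

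For $G$, after differentiating $G=2TT'+(n-1)(C_\kappa/S_\kappa)T^2-T^2\phi'$ and substituting the ODE expression for $T''$, I expect the two cross terms carrying the factor $(C_\kappa/S_\kappa)TT'$ to cancel exactly, as will the two $\phi' TT'$ terms; this should leave the clean identity
\begin{equation*}
G'(t)=2(T'(t))^2+\frac{(n-1)T^2(t)}{S_\kappa^2(t)}-T^2(t)\,\phi''(t),
\end{equation*}
which is nonnegative by $\phi''\le 0$. Hence $G$ is non-decreasing. Nonnegativity then follows from $G(0)=0$: each of the three summands defining $G$ tends to $0$ as $t\to 0^+$, using $T(t)=t+O(t^2)$ and $C_\kappa/S_\kappa=1/t+O(t)$, which gives in particular $(n-1)(C_\kappa/S_\kappa)T^2\sim(n-1)t\to 0$.

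For $H$ the nonnegativity is immediate from the definition. For monotonicity, the same differentiate-and-substitute procedure yields
\begin{equation*}
H'(t)=2\phi'(T')^2-\frac{2(n-1)C_\kappa}{S_\kappa}\Bigl[(T')^2+\frac{T^2}{S_\kappa^2}\Bigr]+\frac{4(n-1)TT'}{S_\kappa^2}.
\end{equation*}
To expose the sign I will complete the square in $T'$ versus $T/(C_\kappa S_\kappa)$, reabsorbing the last term; using $C_\kappa^2-1=-\kappa S_\kappa^2$ the identity becomes
\begin{equation*}
H'(t)=2\phi'(T')^2-\frac{2(n-1)C_\kappa}{S_\kappa}\Bigl(T'-\frac{T}{C_\kappa S_\kappa}\Bigr)^2+\frac{2(n-1)\kappa T^2}{C_\kappa S_\kappa}.
\end{equation*}
Under the hypotheses $\phi'\le 0$ and $\kappa\in\{0,-1\}$ all three summands are nonpositive on $(0,\infty)$, since $C_\kappa\ge 1$ and $S_\kappa>0$ there. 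Thus $H'\le 0$.

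The main obstacle is the completing-the-square step for $H'$: it is precisely the inequality $C_\kappa^2\ge 1$, valid only in the nonpositive curvature setting $\kappa\le 0$, that makes the square-decomposition leave a nonpositive remainder $2(n-1)\kappa T^2/(C_\kappa S_\kappa)$ rather than a term of indefinite sign. Once this reorganization is found, the signs fall out immediately from the stated hypotheses on $\phi$.
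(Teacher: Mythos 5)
Your proposal is correct, and the core mechanism is the one the paper uses: differentiate, substitute the ODE (\ref{2-1}) for $T''$, and read off signs. But the algebraic organization differs in two places worth noting. First, for the nonnegativity of $G$ the paper first establishes $T>0$ and $T'>0$ on $(0,R)$ (via the maximum principle and a contradiction argument with the ODE) and then observes that every summand in the definition of $G$ is nonnegative; you instead deduce $G\ge 0$ from $G(0)=0$ together with $G'\ge 0$, after verifying the removable singularity at $t=0$. Your route bypasses the positivity argument for $T$ and $T'$ entirely, which is a genuine simplification -- your formulas $G'=2(T')^2+\tfrac{(n-1)T^2}{S_\kappa^2}-T^2\phi''$ and the square-completed identity for $H'$ never actually need $T$ or $T'$ to have a definite sign. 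Second, for $H'\le 0$ the paper writes $H'=-\tfrac{2(n-1)}{S_\kappa^3}\bigl[C_\kappa S_\kappa^2(T')^2-2S_\kappa TT'+C_\kappa T^2\bigr]+2\phi'(T')^2$ and then uses the inequality $C_\kappa\ge 1$ to bound the bracket below by the perfect square $(S_\kappa T'-T)^2$; you instead complete the square exactly, obtaining the identity
\begin{equation*}
H'=2\phi'(T')^2-\frac{2(n-1)C_\kappa}{S_\kappa}\Bigl(T'-\frac{T}{C_\kappa S_\kappa}\Bigr)^2+\frac{2(n-1)\kappa T^2}{C_\kappa S_\kappa},
\end{equation*}
so that the curvature restriction $\kappa\le 0$ appears transparently as the sign of the residual term, rather than being used once inside an inequality. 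Both hinge on $C_\kappa\ge 1$ for $\kappa\in\{0,-1\}$, but yours is an exact decomposition while the paper's is an estimate. Either is acceptable; your version makes the role of the sign of $\kappa$ and the obstruction on the sphere slightly more visible.
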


\begin{proof}
By the maximum principle, we obtain $T(t)>0$ on $(0,R)$. We also
have $T'(t)>0$ on $(0,R)$. Indeed, if $T'(t)$ is not always positive
 on $(0,R)$, then $T(t)$ is not always increasing on
$(0,R)$. In this situation, there must exist a point
$t_{0}\in(0,R)$, such that $T'(t_{0})=0$ and $T''(t_{0})\leq 0$.
Applying the system (\ref{2-1}) at $t_{0}$, we can obtain
\begin{eqnarray*}
T''(t_{0})+\left((n-1)\frac{C_{\kappa}(t_{0})}{S_{\kappa}(t_{0})}-\phi'(t_{0})\right)T'(t_{0})-\frac{n-1}{S_{\kappa}^{2}(t_{0})}T(t_{0})=0,
\end{eqnarray*}
and from which
$T''(r_{0})=\frac{n-1}{S_{\kappa}^{2}(r_{0})}T(r_{0})>0$ holds.
However, this is a contradiction. So,  $T'(t)>0$ on $(0,R)$. Since
$T(t)$ is non-negative and increasing on $[0,+\infty)$,
$\phi'(t)\leq 0$ on $[0,+\infty)$, we have that functions $G$ and
$H$ are non-negative on $[0,+\infty)$. Using the system (\ref{2-1})
and noticing that $\phi''(t)\leq 0$, on $(0,+\infty)$ we have
\begin{eqnarray}  \label{2-10}
G'(t)&=&2T(t)T''(t)+2(T'(t))^{2}+(n-1)\left(2\frac{C_{\kappa}(t)}{S_{\kappa}(t)}T(t)T'(t)+\frac{S''_{\kappa}(t)}{S_{\kappa}(t)}T^{2}(t)-
\frac{(S'_{\kappa}(t))^{2}}{S_{\kappa}^{2}(t)}T^{2}(t)\right)   \nonumber\\
&& -2T(t)T'(t)\phi'(t)-T^{2}(t)\phi''(t)\nonumber\\
&=&2T(t)\left[-\left((n-1)\frac{C_{\kappa}(t)}{S_{\kappa}(t)}-\phi'(t)\right)T'(t)+\frac{n-1}{S_{\kappa}^{2}(t)}T(t)\right]+2(T'(t))^{2}+ \nonumber\\
&&(n-1)\left(2\frac{C_{\kappa}(t)}{S_{\kappa}(t)}T(t)T'(t)+\frac{S''_{\kappa}(t)}{S_{\kappa}(t)}T^{2}(t)-\frac{(C_{\kappa}(t))^{2}}{S_{\kappa}^{2}(t)}T^{2}(t)\right)- \nonumber\\
&&2T(t)T'(t)\phi'(t)-T^{2}(t)\phi''(t) \nonumber\\
&=& 2(T'(t))^{2}+\frac{(n-1)T^{2}(t)}{S_{\kappa}^{2}(t)}\left[2+S_{\kappa}(t)S''_{\kappa}(t)-(C_{\kappa}(t))^{2}\right]-T^{2}(t)\phi''(t) \nonumber\\
&=& 2(T'(t))^{2}+\frac{(n-1)T^{2}(t)}{S_{\kappa}^{2}(t)}-T^{2}(t)\phi''(t) \nonumber\\
&\geq&0,
\end{eqnarray}
where in the last equality of (\ref{2-10}) we have used the identity
$S_{\kappa}(t)S''_{\kappa}(t)-(C_{\kappa}(t))^{2}=-1$ for all
$\kappa\in\mathbb{R}$.  Hence, $G(t)$ is non-decreasing on
$(0,+\infty)$. Likewise, noticing that $\phi'(t)\leq 0$ and
$S'_{\kappa}(t)=C_{\kappa}(t)\geq 1$ for $\mathbb{R}^{n}$ or
$\mathbb{H}^{n}$, we also have on $(0,+\infty)$ that
 \begin{eqnarray*}
H'(t)&=&2T'(t)T''(t)-2(n-1)\frac{C_{\kappa}(t)}{S_{\kappa}^{3}(t)}T^{2}(t)+\frac{2(n-1)}{S_{\kappa}^{2}(t)}T(t)T'(t)\\
&=&2T'(t)\left[(\phi'(t)-(n-1)\frac{C_{\kappa}(t)}{S_{\kappa}(t)})T'(t)+\frac{n-1}{S_{\kappa}^{2}(t)}T(t)\right]-2(n-1)\frac{S'_{\kappa}(t)}{S_{\kappa}^{3}(t)}T^{2}(r)+ \\
&&\qquad \frac{2(n-1)}{S_{\kappa}^{2}(t)}T(t)T'(t)\\
&=&-\frac{2(n-1)}{S_{\kappa}^{3}(t)}\left[C_{\kappa}(t)S_{\kappa}^{2}(t)(T'(t))^{2}-2S_{\kappa}(t)T(t)T'(t)+S'_{\kappa}(t)T^{2}(t)\right]+2(T'(t))^{2}\phi'(t)\\
&\leq&-\frac{2(n-1)}{S_{\kappa}^{3}(t)}\left(S_{\kappa}(t)T'(t)-T(t)\right)^{2}+2(T'(t))^{2}\phi'(t)\\
&\leq& 0.
\end{eqnarray*}
Thus, $H(t)$ is non-increasing on $(0,+\infty)$.
\end{proof}

The following property is also necessary.

\begin{lemma} \label{lemma2-4}
 (\cite[Lemma 4.4]{CM})
Assume that $\Omega$ is a bounded domain in $\mathbb{R}^n$ (or
$\mathbb{H}^n$) with boundary. If $B_{R}(o)$ is the (geodesic) ball
of radius $R$ and centered at the
 origin $o$
 of
 $\mathbb{R}^{n}$ such that $|\Omega|_{\phi}=|B_{R}(o)|_{\phi}$, and the non-constant
functions $u(t)$ and $v(t)$ defined on $[0,+\infty)$ are
monotonically non-increasing and non-decreasing, respectively, then
\begin{eqnarray*}
&&\int_\Omega v(|x|)d\eta\geq\int_{B_{R}(o)}v(|x|)d\eta,\\
&&\int_\Omega u(|x|)d\eta\leq\int_{B_{R}(o)}u(|x|)d\eta.
\end{eqnarray*}
The equality holds if and only if $\Omega=B_{R}(o)$ (or $\Omega$ is
isometric to $B_{R}(o)$).
\end{lemma}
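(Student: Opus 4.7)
The plan is to prove both inequalities by a direct rearrangement argument on the symmetric difference $\Omega\triangle B_{R}(o)=(\Omega\setminus B_{R}(o))\cup(B_{R}(o)\setminus\Omega)$. First I would write
\begin{eqnarray*}
\int_{\Omega} v(|x|)\,d\eta-\int_{B_{R}(o)} v(|x|)\,d\eta=\int_{\Omega\setminus B_{R}(o)} v(|x|)\,d\eta-\int_{B_{R}(o)\setminus\Omega} v(|x|)\,d\eta,
\end{eqnarray*}
and then exploit the weighted-volume constraint $|\Omega|_{\phi}=|B_{R}(o)|_{\phi}$, which, after subtracting the common piece $|\Omega\cap B_{R}(o)|_{\phi}$, is equivalent to the identity $|\Omega\setminus B_{R}(o)|_{\phi}=|B_{R}(o)\setminus\Omega|_{\phi}$.

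The key geometric observation is that on $\Omega\setminus B_{R}(o)$ one has $|x|\geq R$ while on $B_{R}(o)\setminus\Omega$ one has $|x|\leq R$; since $v$ is non-decreasing, this yields the pointwise bounds $v(|x|)\geq v(R)$ on the first set and $v(|x|)\leq v(R)$ on the second. Integrating these against the non-negative measure $d\eta$ and combining with the equal weighted volumes of the two pieces gives
\begin{eqnarray*}
\int_{\Omega} v(|x|)\,d\eta-\int_{B_{R}(o)} v(|x|)\,d\eta\geq v(R)\bigl(|\Omega\setminus B_{R}(o)|_{\phi}-|B_{R}(o)\setminus\Omega|_{\phi}\bigr)=0,
\end{eqnarray*}
which proves the first estimate. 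The second estimate follows by the identical argument with the inequality signs reversed because $u$ is non-increasing (alternatively, by applying the already-proved case to $-u$).

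The only point requiring a little care is the rigidity statement. If equality holds in the first inequality, then the pointwise estimates $v(|x|)\geq v(R)$ and $v(|x|)\leq v(R)$ must be equalities $d\eta$-almost everywhere on $\Omega\setminus B_{R}(o)$ and $B_{R}(o)\setminus\Omega$ respectively. Since $v$ is non-constant and non-decreasing, one can choose levels $t_{1}<R<t_{2}$ (or a one-sided version thereof) with $v(t_{1})<v(R)<v(t_{2})$, and this forces both $\{|x|>t_{2}\}\cap(\Omega\setminus B_{R}(o))$ and $\{|x|<t_{1}\}\cap(B_{R}(o)\setminus\Omega)$ to have zero weighted measure; letting $t_{1}\uparrow R$ and $t_{2}\downarrow R$ pushes the full symmetric difference $\Omega\triangle B_{R}(o)$ to vanish. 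Since $e^{-\phi}>0$, the Riemannian volume of $\Omega\triangle B_{R}(o)$ is also zero, giving $\Omega=B_{R}(o)$ in the Euclidean setting (respectively $\Omega$ isometric to $B_{R}(o)$ in the hyperbolic setting, where $|x|$ is read as the geodesic distance $d(o,x)$). No serious obstacle is expected; the only mild subtlety is that in $\mathbb{H}^{n}$ one must interpret $|x|$ as the geodesic distance from the origin $o$, but the radial nature of $\phi,u,v$ makes the Euclidean and hyperbolic arguments literally identical.
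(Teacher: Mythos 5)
Your argument for the two integral inequalities is correct and is precisely the symmetric-difference decomposition that the paper itself reproduces in Section \ref{S3} when it compares $\int_\Omega G\,d\mu$ with $\int_{B_R(o)} G\,d\mu$ (the two displayed estimates after (\ref{3-add-7}) and the sentence following them); the paper does not reprove Lemma \ref{lemma2-4} but simply cites \cite[Lemma 4.4]{CM}, so on this part you have recovered the intended argument.

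The rigidity step, however, has a genuine gap. Your statement that ``one can choose levels $t_1<R<t_2$ with $v(t_1)<v(R)<v(t_2)$'' and then send $t_1\uparrow R$, $t_2\downarrow R$ tacitly assumes that $v$ is strictly monotone at $t=R$. Under the stated hypothesis that $v$ is merely non-decreasing and non-constant, $v$ may be constant on a whole interval $[t^-,t^+]\ni R$ with $t^-<t^+$. Your equality conditions then only force the symmetric difference $\Omega\triangle B_R(o)$ to lie, up to $\mu$-null sets, in the shell $\{t^-\le |x|\le t^+\}$, and the limiting step cannot continue past $t^-$ and $t^+$. Indeed, with $v(t)=\min(t,1)$, $\phi\equiv 0$, $R=2$ in $\mathbb{R}^2$, one can trade an annular sector of $B_2\setminus B_{1.5}$ for a sector of equal area in $B_3\setminus B_2$ to obtain $\Omega\neq B_R(o)$ with the same volume and with equality in the lemma. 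So the ``if and only if'' cannot be deduced from ``non-constant'' alone; what your argument actually requires (and what is actually used in the paper) is strict monotonicity in a neighbourhood of $R$. This does not endanger the application in Section \ref{S3}, because there the lemma is invoked for the function $G$ of Lemma \ref{lemma2-3}, and the computation (\ref{2-10}) shows $G'(t)\geq 2(T'(t))^2+(n-1)T^2(t)/S_\kappa^2(t)>0$ for $t>0$, so $G$ is strictly increasing and the degenerate case never occurs. But as a blind proof of Lemma \ref{lemma2-4} in the form literally stated, your rigidity argument does not close; you should either strengthen the hypothesis to strict monotonicity near $R$ or observe that this is automatically satisfied by the $G$ to which it will be applied.
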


\section{A proof of Theorems \ref{theo-1} and \ref{theo-2}}  \label{S3}
\renewcommand{\thesection}{\arabic{section}}
\renewcommand{\theequation}{\thesection.\arabic{equation}}
\setcounter{equation}{0}

Now, we have:

 \textbf{\emph{A proof of Theorems \ref{theo-1} and
\ref{theo-2}}}. Set
\begin{eqnarray}  \label{f-add}
f(t)=\left\{
\begin{array}{ll}
T(t),~~&\mathrm{if} ~0\leq t \leq R,\\
T(R),~~&\mathrm{if} ~t> R,
\end{array}
\right.
 \end{eqnarray}
 with $T(t)$ the solution to the system (\ref{2-1}),
and
\begin{eqnarray}
\psi_{i}(\xi)=\frac{x_{i}}{t}, \qquad
P_{i}(t,\xi)=f(t)\psi_{i}(\xi), \qquad i=1,2,\cdots,n.
\end{eqnarray}
By the Brouwer fixed point theorem and using a similar argument to
that of Ashbaugh-Benguriar given in \cite{AB} or Chen given in
\cite{HC}, one can always choose a suitable origin
$o\in\mathrm{hull}(\Omega)$ such that
 \begin{eqnarray*}
\int_{\partial\Omega}P_{i}(t,\xi)\widehat{d\mu}=\int_{\partial\Omega}f(t)\psi_{i}(\xi)\widehat{d\mu}=0,\qquad
i=1,2,\cdots,n,
 \end{eqnarray*}
and
\begin{eqnarray*}
\int_{\partial\Omega}P_{i}(t,\xi)\widehat{d\mu}=\int_{\partial\Omega}f(t)\psi_{i}(\xi)u_{j,\phi}\widehat{d\mu}=0,\qquad
i=2,3,\cdots,n,~j=1,2,\cdots,i-1,
\end{eqnarray*}
where $u_{j,\phi}$ denotes an eigenfunction belonging to the $j$-th
Steklov eigenvalue $\sigma_{j,\phi}(\Omega)$ of the Witten-Laplacian
on the domain $\Omega$.
 Anyway, readers who are not familiar with this process of suitably
 finding the origin $o$ can check the $10^{\mathrm{th}}$-$11^{\mathrm{th}}$ pages of
 our
 previous work
 \cite{CM1} for a similar and detailed explanation.
  Using the
 variational
 characterizations (\ref{cha-1})-(\ref{cha-2}), we have for $1\leq i\leq
 n$ that
  \begin{eqnarray*}
\sigma_{i,\phi}(\Omega)\int_{\partial\Omega}P_{i}^{2}(t,\xi)\widehat{d\mu}\leq\int_{\Omega}|\nabla
P_{i}(t,\xi)|^{2}d\mu
  \end{eqnarray*}
and
\begin{eqnarray*}
\int_{\Omega}|\nabla
P_{i}(t,\xi)|^{2}d\mu=\int_{\Omega}\left[(T'(t))^{2}\psi_{i}^{2}(\xi)+T^{2}(t)|\nabla^{\mathbb{S}^{n-1}}\psi_{i}(\xi)|^{2}S_{\kappa}^{-2}(t)\right]d\mu,
\end{eqnarray*}
which implies
\begin{eqnarray} \label{3-add-1}
\int_{\partial\Omega}P_{i}^{2}(t,\xi)\widehat{d\mu}\leq\frac{1}{\sigma_{i,\phi}(\Omega)}\int_{\Omega}\left[(T'(t))^{2}\psi_{i}^{2}(\xi)+T^{2}(t)|\nabla^{\mathbb{S}^{n-1}}\psi_{i}(\xi)|^{2}S_{\kappa}^{-2}(t)\right]d\mu.
\end{eqnarray}
Summing both sides of (\ref{3-add-1}) over the index $i$ from $1$ to
$n$ results in
 \begin{eqnarray}   \label{3-add-2}
\int_{\partial\Omega}T^{2}(t)\widehat{d\mu}\leq\sum\limits_{i=1}^{n}
\frac{1}{\sigma_{i,\phi}(\Omega)}\int_{\Omega}\left[(T'(t))^{2}\psi_{i}^{2}(\xi)+T^{2}(t)|\nabla^{\mathbb{S}^{n-1}}\psi_{i}(\xi)|^{2}S_{\kappa}^{-2}(t)\right]d\mu.
 \end{eqnarray}
Since
$|\nabla^{\mathbb{S}^{n-1}}\psi_{i}(\xi)|^{2}=1-\psi_{i}^{2}(\xi)$,
we can obtain from (\ref{3-add-2}) that
\begin{eqnarray} \label{3-add-3}
\int_{\partial\Omega}T^{2}(t)\widehat{d\mu}&\leq&\sum\limits_{i=1}^{n}\frac{1}{\sigma_{i,\phi}(\Omega)}\left[\int_{\Omega}(T'(t))^{2}\psi_{i}^{2}(\xi)d\mu+
\int_{\Omega}\frac{T^{2}(t)}{S_{\kappa}^{2}(t)}\left(1-\psi_{i}^{2}(\xi)d\mu\right)\right]\nonumber\\
&=&\sum\limits_{i=1}^{n}\frac{1}{\sigma_{i,\phi}(\Omega)}\left[\int_{\Omega}\frac{T^{2}(t)}{S_{\kappa}^{2}(t)}d\mu+
\int_{\Omega}\left((T'(t))^{2}-\frac{T^{2}(t)}{S_{\kappa}^{2}(t)}\right)\psi_{i}^{2}(\xi)d\mu\right].
\qquad
\end{eqnarray}
Using Lemma \ref{lemma2-3} and a similar calculation to that on page
941 of \cite{HC}, we have
 \begin{eqnarray} \label{3-add-4}
\sum\limits_{i=1}^{n}\frac{1}{\sigma_{i,\phi}(\Omega)}\left((T'(t))^{2}-\frac{T^{2}(t)}{S_{\kappa}^{2}(t)}\right)\psi_{i}^{2}(\xi)\leq
\frac{1}{n-1}\sum_{i=1}^{n-1}\frac{(T'(t))^{2}}{\sigma_{i,\phi}(\Omega)}-\frac{1}{\sigma_{n,\phi}(\Omega)}\frac{T^{2}(t)}{S_{\kappa}^{2}(t)}.
 \end{eqnarray}
Putting (\ref{3-add-4}) into (\ref{3-add-3}) results in
 \begin{eqnarray}  \label{3-add-5}
\int_{\partial\Omega}T^{2}(t)\widehat{d\mu}\leq
\frac{1}{n-1}\sum_{i=1}^{n-1}\frac{1}{\sigma_{i,\phi}(\Omega)}\int_{\Omega}\left((T'(t))^{2}+(n-1)\frac{T^{2}(t)}{S_{\kappa}^{2}(t)}\right)d\mu.
 \end{eqnarray}
Meanwhile, we notice that
\begin{eqnarray} \label{3-add-6}
\int_{\partial\Omega}T^{2}(t)\widehat{d\mu}&\geq&\int_{\partial\Omega}T^{2}(t)e^{-\phi}\langle\nabla t,\vec{\eta}\rangle\widehat{dv}\nonumber\\
&=&\int_{\Omega}\mathrm{div}(T^{2}(t)e^{-\phi}\nabla t)dv\nonumber\\
&=&\int_{\Omega}\left(\nabla(T^{2}(t)e^{-\phi})\nabla t+T^{2}(t)e^{-\phi}\Delta t\right)dv\nonumber\\
&=&\int_{\Omega}\left[2T(t)T'(t)e^{-\phi}+e^{-\phi}(-\phi')T^{2}(t)+\frac{(n-1)C_{\kappa}(t)}{S_{\kappa}(t)}T^{2}(t)e^{-\phi}\right]dv\nonumber\\
&=&\int_{\Omega}G(t)d\mu.
\end{eqnarray}
Combining (\ref{3-add-5}) and (\ref{3-add-6}), and using Theorem
\ref{theorem2-1}, Lemma \ref{lemma2-3}, Lemma \ref{lemma2-4} and a
similar method to that on page 635 of \cite{HFW}, we have
\begin{eqnarray} \label{3-add-7}
\frac{1}{n-1}\sum_{i=1}^{n-1}\frac{1}{\sigma_{i,\phi}(\Omega)}\geq\frac{\int_{\Omega}G(t)d\mu}{\int_{\Omega}H(t)d\mu}
\geq\frac{\int_{B_{R}(o)}G(t)d\mu}{\int_{B_{R}(o)}H(t)d\mu}
=\frac{1}{\sigma_{1,\phi}(B_{R}(o))},
\end{eqnarray}
which implies (\ref{II-1}) or (\ref{II-1-1}) directly. Here, of
course, $G(t)$, $H(t)$ are defined by (\ref{2-8}) and (\ref{2-9})
respectively.

Now, we would like to give the rigidity description for the equality
case in the spectral isoperimetric inequality (\ref{II-1}) or
(\ref{II-1-1}). From (\ref{3-add-7}) and Lemma \ref{lemma2-3}, it is
not hard to see that the radial function $G(t)$ is  nonnegative and
non-decreasing, while the radial function $H(t)$ is  nonnegative and
non-increasing. Moreover, one has
 \begin{eqnarray}  \label{3-add-8}
\int_{\Omega}G(t)d\mu&=&\int_{\Omega\cap
B_{R}(o)}G(t)d\mu+\int_{\Omega\setminus(\Omega\cap
B_{R}(o))}G(t)d\mu \nonumber\\
&\geq& \int_{\Omega\cap
B_{R}(o)}G(t)d\mu+G(R)\int_{\Omega\setminus(\Omega\cap
B_{R}(o))}d\mu,
 \end{eqnarray}
and similarly,
 \begin{eqnarray}  \label{3-add-9}
\int_{B_{R}(o)}G(t)d\mu&=&\int_{\Omega\cap
B_{R}(o)}G(t)d\mu+\int_{B_{R}(o)\setminus(\Omega\cap
B_{R}(o))}G(t)d\mu \nonumber\\
&\leq& \int_{\Omega\cap
B_{R}(o)}G(t)d\mu+G(R)\int_{B_{R}(o)\setminus(\Omega\cap
B_{R}(o))}d\mu.
 \end{eqnarray}
On the other hand, since we have assumed
$|\Omega|_{\phi}=|B_{R}(o)|_{\phi}$, then one has
$$\int_{\Omega\setminus(\Omega\cap
B_{R}(o))}d\mu=\int_{B_{R}(o)\setminus(\Omega\cap B_{R}(o))}d\mu,$$
which, together with (\ref{3-add-8}) and (\ref{3-add-9}), implies
$\int_{\Omega}G(t)d\mu\geq\int_{B_{R}(o)}G(t)d\mu$. Clearly, when
the equality holds, from (\ref{3-add-8}) and (\ref{3-add-9}), it is
not hard to see that
\begin{eqnarray*}
\int_{\Omega\setminus(\Omega\cap
B_{R}(o))}G(t)d\mu=G(R)\int_{B_{R}(o)\setminus(\Omega\cap
B_{R}(o))}d\mu
\end{eqnarray*}
and
 \begin{eqnarray*}
\int_{B_{R}(o)\setminus(\Omega\cap
B_{R}(o))}G(t)d\mu=G(R)\int_{B_{R}(o)\setminus(\Omega\cap
B_{R}(o))}d\mu
 \end{eqnarray*}
hold simultaneously. Then using the monotonicity of the function $G$
and Lemma \ref{lemma2-4}, one knows that in this situation $\Omega$
is the ball $B_{R}(o)\subset\mathbb{R}^n$ (or $\Omega$ is isometric
to the geodesic ball $B_{R}(o)\subset\mathbb{H}^n$). BTW, one can
also make a similar argument on the function $H$ in order to get the
rigidity result for the equality case in the spectral isoperimetric
inequality (\ref{II-1}) or (\ref{II-1-1}). This completes the proof.
\hfill $\square$

\section*{Acknowledgments}
\renewcommand{\thesection}{\arabic{section}}
\renewcommand{\theequation}{\thesection.\arabic{equation}}
\setcounter{equation}{0} \setcounter{maintheorem}{0}

This research was supported in part by the NSF of China (Grant Nos.
11801496 and 11926352), the Fok Ying-Tung Education Foundation
(China), and Hubei Key Laboratory of Applied Mathematics (Hubei
University). The authors would like to thank Dr. Ruifeng Chen for
the useful discussions during the preparation of this paper.

\end{document}